\newtheorem{theorem}{Theorem}[section]
\newtheorem{lemma}[theorem]{Lemma}
\newtheorem{corollary}[theorem]{Corollary}
\newtheorem{proposition}[theorem]{Proposition}
\newtheorem{problem}[theorem]{Problem}
\title{Complexity and Structural Results for the Hull and Convexity Numbers in Cycle Convexity for Graph Products}
\author{Bijo S. Anand$^{a}$\footnote{bijos\_anand@yahoo.com}
\and
Ullas Chandran S. V. $^{b}$\footnote{svuc.math@gmail.com}
\and
Julliano R. Nascimento$^{c}$\footnote{jullianonascimento@ufg.br}
\and Revathy S. Nair$^{d}$\footnote{revathyrahulnivi@gmail.com} 
\\\\
$^{a}$\small Department of Mathematics, Sree Narayana College, Punalur, Kerala\\
$^{b}$\small Department of Mathematics, Mahatma Gandhi College,\\\small Thiruvananthapuram, Kerala, India\\
$^{c}$ \small Instituto de Informática, Universidade Federal de Goiás, Goiânia, GO, Brazil\\
$^{d}$ \small Department of Mathematics, Mar Ivanios College, University of Kerala, \\\small Thiruvananthapuram, India
}
\date{\today}
\begin{document}
\maketitle
\begin{abstract}
Let $G$ be a graph and $S \subseteq V(G)$. In the cycle convexity, we say that $S$ is \textit{cycle convex} if for any $u\in V(G)\setminus S$, the induced subgraph of $S\cup\{u\}$ contains no cycle that includes $u$. The \textit{cycle convex hull} of $S$ is the smallest convex set containing $S$. The \textit{cycle hull number} of $G$, denoted by $hn_{cc}(G)$, is the cardinality of the smallest set $S$ such that the convex hull of $S$ is $V(G)$.
The \textit{convexity number} of $G$, denoted by $C_{cc}(G)$, is the maximum cardinality of a proper convex set of $V(G)$.
This paper studies cycle convexity in graph products. We show that the cycle hull number is always two for strong and lexicographic products. For the Cartesian, we establish tight bounds for this product and provide a closed formula when the factors are trees, generalizing an existing result for grid graphs. In addition, given a graph $G$ and an integer $k$, we prove that $hn_{cc}(G) \leq k$ is NP-complete even if $G$ is a bipartite Cartesian product graph, addressing an open question in the literature. Furthermore, we present exact formulas for the cycle convexity number in those three graph products. That leads to the NP-completeness of, given a graph $G$ and an integer $k$, deciding whether $C_{cc}(G) \geq k$, when $G$ is a Cartesian, strong or lexicographic product graph.
\end{abstract}

\noindent{\small {\bf Keywords:} convexity; convexity number; hull number; cycle convexity number; cycle hull number; Cartesian product; strong product; lexicographic product.}

\noindent{\small {\bf AMS Subj.Class:} 05C69, 05C76, 05C85}

\section{Introduction} \label{sec:intro}
A \emph{finite convexity space} is defined as a pair $(V, \mathcal{C})$, where $V$ is a non-empty finite set and $\mathcal{C}$ is a collection of subsets of $V$ satisfying the following properties: $\emptyset \in \mathcal{C}$, $V \in \mathcal{C}$, and $\mathcal{C}$ is closed under intersections. The elements of $\mathcal{C}$ are referred to as \emph{convex sets}; see \cite{van-1993}. Various convexity structures associated with the vertex set of a graph are widely studied. The most natural convexities in graphs are path convexities, defined in terms of a family of paths $\mathcal{P}$, where a set $S$ of vertices in $G$ is $\mathcal{P}$-\emph{convex} if $S$ includes all vertices of every path in $\mathcal{P}$ between any two vertices of $S$. An extensive overview of different types of path convexities are provided in \cite{pelayo-2015}. The well-known \emph{geodesic convexity} corresponds to the case where $\mathcal{P}$ is the family of all shortest paths; see \cite{everett1985hull, buckley-1990, farber-1986}.
Other significant examples include \emph{monophonic convexity} \cite{caceres-2005, source17, source20} and $P_3$-\emph{convexity}~\cite{source11, centeno2011irreversible, coelho2019p3}. %m^3$-convexity \cite{source19}, lk-convexity. 
These convexities are defined respectively over induced paths and paths with three vertices. There are also convexity definitions that do not rely on path systems; some significant examples include \emph{Steiner convexity}~\cite{source9} and $\Delta$-\emph{convexity}~\cite{bijo2,bijo3,bijo1}. In the case of Steiner convexity, a set $S \subseteq V(G)$ is \emph{Steiner convex} if, for any subset $S' \subseteq S$, all vertices of any Steiner tree with terminal set $S'$ are contained within $S$. In the case of $\Delta$-convexity, a set $S\subseteq V(G)$ is $\Delta$-\emph{convex} if every vertex $u\in V(G)\setminus S$ fails to form a triangle with any two vertices in $S$. 
Graph convexities have been studied in many contexts, including the determination of convexity invariants such as the hull number, the interval number, and the convexity number. A major reference work by Pelayo~\cite{pelayo-2015} provides an extensive survey of geodesic convexity.

A newly introduced convexity, known as \textit{cycle convexity}, was recently studied in~\cite{interval08}. %In this convexity, a set $S\subseteq V(G)$ is \textit{cycle convex} if for any $u\in V(G)\setminus S$, the induced subgraph of $S\cup\{u\}$ contains no cycle that includes $u$.
In this convexity, for a set $S$ of vertices in a graph $G$, the \textit{cycle interval} of $S$, denoted by $\langle S \rangle$, is the set formed by the vertices of $S$ and any $w \in V(G)$ that form a cycle with the vertices of $S$. If $\langle S \rangle = S$, then $S$ is \textit{cycle convex} in $G$. The \textit{cycle convexity number} of $G$, $C_{cc}(G)$, is the maximum cardinality of a proper cycle convex set of $V(G)$. The \textit{cycle convex hull} of a set $S$, denoted by $\langle S \rangle_C$, is the smallest cycle convex set containing $S$. We say that a vertex $w \in V(G)$ is \textit{generated} by $S$ if $w\in\langle S \rangle_C$. The \textit{hull number} of $G$ in the cycle convexity, or more briefly, the \textit{cycle hull number} of $G$, $hn_{cc}(G)$, is the cardinality of the smallest set $S$ such that $\langle S \rangle_C = V(G)$. 

The study of cycle convexity in graphs has gained attention due to its applications in both graph theory and related fields such as Knot Theory~\cite{araujo2020cycle}. Concerning the cycle hull number, Araujo et al.~\cite{araujo2024hull} presented bounds for this parameter in $4$-regular planar graphs and proved that, given a planar graph $G$ and an integer $k$, determining whether $hn_{cc}(G) \leq k$ is NP-complete. They also showed that the parameter is computable in polynomial time for classes like chordal, $P_4$-sparse, and grid graphs. Regarding the cycle convexity number, Lima, Marcilon, and Medeiros~\cite{lima2024complexity} showed that, given a graph $G$ and an integer $k$, determining whether $C_{cc}(G) \geq k$ is both NP-complete and W[1]-hard when parameterized by the size of the solution. However, for certain graph classes such as extended $P_4$-laden graphs, they show an algorithm able to solve the problem in polynomial time. In addition, other results have been obtained for related parameters in cycle convexity, for instance, the interval number~\cite{interval08} and the percolation time~\cite{lima2024complexity}.

In this work, we explore some properties of cycle convex sets in graph products, focusing on the hull number and the convexity number. For the strong and lexicographic products of nontrivial connected graphs, we show that the cycle hull number is always two. For the Cartesian product, we establish tight bounds on the cycle hull number and present a closed formula for this parameter for Cartesian products of trees, which generalizes a known result for grid graphs~\cite{araujo2024hull}. On the complexity side, we close a gap in the literature regarding the cycle hull number for bipartite graphs, left open by Araujo et al.~\cite{araujo2024hull}. Specifically, given a graph $G$ and an integer $k$, we show that determining whether $hn_{cc}(G) \leq k$ is NP-complete even if $G$ is a bipartite Cartesian product graph.
Regarding the cycle convexity number, we show exact formulas for the Cartesian, strong, and lexicographic products of nontrivial connected graphs. As a corollary, we find the cycle convexity number of the three products when the factors are complete, cycle, or path graphs. In addition, the provided formulas directly imply the NP-completeness of the decision problem related to the convexity number for the three considered product graphs.

The paper is organized in more than three sections. Section~\ref{sec:pre} presents some notations and terminology, Section~\ref{sec:hull} focuses on the cycle hull number and Section~\ref{sec:cx} addresses the cycle convexity number.

\section{Preliminaries}\label{sec:pre}
We now introduce the terminology used throughout this paper. A graph $G = (V, E)$ is defined as a finite, undirected simple graph. The \emph{open neighborhood} $N(u)$ of a vertex $u \in V(G)$ is the set $\{ v \in V(G) : uv\in E(G) \}$, while the \emph{closed neighborhood} $N[u]$ is defined as $N(u) \cup \{ u \}$.  We will write $u \sim v$ if vertices $u$ and $v$ are adjacent. A vertex $v$ in a connected graph $G$ is a \emph{cut-vertex} if the graph $G - v$ is disconnected. The subgraph of $G$ induced by a subset $S \subseteq V(G)$ is denoted by $G[S]$. A vertex is \emph{simplicial} if its neighborhood induces a clique. The \emph{clique number} $\omega(G)$ of $G$ is the size of the largest clique in $G$, while the \emph{independence number} $\alpha(G)$ represents the size of the largest independent set. The path of order $\ell$ is denoted as $P_{\ell}$,  the cycle of length $\ell$ as $C_{\ell}$, the complete graph of order $\ell$ as $K_{\ell}$.

%For a set $S$ of vertices in a graph $G$, the \textit{cycle interval} of $S$, denoted by $\langle S \rangle$, is the set formed by the vertices of $S$ and any $w \in V(G)$ that form a cycle with the vertices of $S$. If $\langle S \rangle = S$, then $S$ is \textit{cycle convex} in $G$. The \textit{cycle convexity number} of $G$, $C_{cc}(G)$, is the maximum cardinality of a proper cycle convex set of $V(G)$. The \textit{cycle convex hull} of a set $S$, denoted by $\langle S \rangle_C$, is the smallest cycle convex set containing $S$. We say that $w \in V(G)$ is \textit{generated} by $S$ if $\langle S \rangle_C$. The \textit{hull number} of $G$ in the cycle convexity, or more briefly, the \textit{cycle hull number} of $G$, $hn_{cc}(G)$, is the cardinality of the smallest set $S$ such that $\langle S \rangle_C = V(G)$. 

Let $G$ and $H$ be two graphs. In this paper, we discuss the cycle hull number and the cycle convexity number of the \emph{Cartesian product} $G \Box H$, \emph{lexicographic product} $G \circ H$, and \emph{strong product} $G \boxtimes H$. The graphs $G$ and $H$ are called \textit{factors}. All these products share the vertex set $V(G) \times V(H)$. For $(g_1, h_1), (g_2, h_2) \in V(G) \times V(H)$:
In the Cartesian product $G \square H$, the vertices $(g_1, h_1)$ and $(g_2, h_2)$ are adjacent if and only if either 
i) $g_1 \sim g_2$ in $G$ and $h_1 = h_2$, or 
ii) $g_1 = g_2$ and $h_1 \sim h_2$ in $H$.
In the lexicographic product $G \circ H$, these vertices are adjacent if either 
i) $g_1 \sim g_2$, or 
ii) $g_1 = g_2$ and $h_1 \sim h_2$.
Finally, in the strong product $G \boxtimes H$, the vertices $(g_1, h_1)$ and $(g_2, h_2)$ are adjacent if one of the following holds: 
i) $g_1 \sim g_2$ and $h_1 = h_2$, 
ii) $g_1 = g_2$ and $h_1 \sim h_2$, or 
iii) $g_1 \sim g_2$ and $h_1 \sim h_2$.

If $\ast \in \{\square, \circ, \boxtimes\}$, then the \emph{projection mappings} $\pi_G: V(G \ast H) \rightarrow V(G)$ and $\pi_H: V(G \ast H) \rightarrow V(H)$ are given by $\pi_G(u,v) = u$ and $\pi_H(u,v) = v$, respectively. We adopt the following conventions. For $u \in V(G)$ and $v \in V(H)$, we define $^uH$ to be the subgraph of $G \ast H$ induced by $\{ u \} \times V(H)$, which we call an \emph{$H$-layer}, while the \emph{$G$-layer}, $G^v$ is the subgraph induced by $V(G) \times \{ v \}$. 
If $S \subseteq V(G \square H)$, then the set $\{g \in V(G) :\ (g,h) \in S \text{ for some } h \in V(H)\}$ is the \emph{projection} $\pi_G(S)$ of $S$ on $G$. The projection $\pi_H(S)$ of $S$ on $H$ is defined analogously. Finally, for $S \subseteq V(G \ast H)$, we call $S$ a \emph{sub-product} of $G \ast H$ if $S = \pi_G(S) \times \pi_H(S)$.

In Section~\ref{sec:intro}, we introduced the cycle convexity definitions. Here, we revisit them, now specifically for the $P_3$-convexity, which will be useful for Section~\ref{sec:hull}. For a set $S$ of vertices in a graph $G$, the \textit{$P_3$-interval} of $S$, denoted by $\langle S \rangle^{{P_3}}$, is the set formed by the vertices of $S$ and any $w \in V(G)$ that forms a $P_3: u, w, v$ with $u,v \in S$. If $\langle S \rangle^{{P_3}} = S$, then $S$ is \textit{$P_3$-convex} in $G$. The \textit{$P_3$-convex hull} of a set $S$, denoted by $\langle S \rangle^{{P_3}}_C$, is the smallest $P_3$-convex set containing $S$. The \textit{$P_3$-hull number} of $G$, denoted as $hn_{P_3}(G)$, is the cardinality of the smallest set $S$ such that $\langle S \rangle^{{P_3}}_C = V(G)$. To avoid ambiguity, we will use the super/subscript $cc$ instead of $P_3$ when referring to cycle convexity. Additionally, we may include the super/subscripts $P_3(G)$ or $cc(G)$ to specify the graph $G$ under consideration.

\section{Hull Number}\label{sec:hull}
In this section, we investigate the cycle hull number of the Cartesian, strong, and lexicographic product of nontrivial connected graphs. We start by examining the strong and lexicographic products in Theorems~\ref{hullstrong} and~\ref{hulllexico}, respectively, for which it is proved that the cycle hull number is always two in each case.

\begin{theorem}\label{hullstrong}
    Let $G$ and $H$ be two nontrivial connected graphs. Then $$hn_{cc}(G\boxtimes H)=2.$$
\end{theorem}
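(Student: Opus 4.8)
The plan is to establish the two inequalities $hn_{cc}(G\boxtimes H)\ge 2$ and $hn_{cc}(G\boxtimes H)\le 2$ separately. The lower bound is immediate: any singleton $\{s\}$ is cycle convex, since for every $u\neq s$ the graph $(G\boxtimes H)[\{s,u\}]$ has at most one edge and hence no cycle through $u$; as $G$ and $H$ are nontrivial we have $|V(G\boxtimes H)|=|V(G)|\cdot|V(H)|\ge 4$, so no set of size at most one can have hull equal to $V(G\boxtimes H)$.

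For the upper bound I would exhibit an explicit two-element generating set. Fix an edge $g_1g_2\in E(G)$ and an edge $h_1h_2\in E(H)$ — these exist because $G$ and $H$ are nontrivial and connected — and put $S=\{(g_1,h_1),(g_2,h_2)\}$. The one tool used throughout is the elementary remark that, since $\langle S\rangle_C$ is cycle convex, any vertex $w$ lying on a cycle with vertices of $\langle S\rangle_C$ — in particular, in a triangle with two of them — must itself lie in $\langle S\rangle_C$; so the whole argument consists of repeatedly pointing out triangles. First, since $g_1\sim g_2$ and $h_1\sim h_2$, the four vertices of $\{g_1,g_2\}\times\{h_1,h_2\}$ induce a $K_4$ in $G\boxtimes H$ (using the diagonal adjacency of the strong product); hence $(g_1,h_2)$ is in a triangle with $(g_1,h_1),(g_2,h_2)\in S$, so $(g_1,h_2)\in\langle S\rangle_C$, and symmetrically $(g_2,h_1)\in\langle S\rangle_C$. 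Thus $\{g_1,g_2\}\times\{h_1,h_2\}\subseteq\langle S\rangle_C$.

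Next I would grow this ``box'' to all of $V(G\boxtimes H)$ in two stages. In the first stage I induct on $|A|$ to show that whenever $\{g_1,g_2\}\subseteq A\subseteq V(G)$ and $G[A]$ is connected, then $A\times\{h_1,h_2\}\subseteq\langle S\rangle_C$: given such an $A\neq V(G)$, connectivity of $G$ yields $g\in V(G)\setminus A$ adjacent to some $a\in A$, and then $(g,h_1)$ forms a triangle with $(a,h_1),(a,h_2)\in\langle S\rangle_C$ (using $g\sim a$ and $h_1\sim h_2$), so $(g,h_1)\in\langle S\rangle_C$, and likewise $(g,h_2)$; processing $V(G)$ along a connected ordering gives $V(G)\times\{h_1,h_2\}\subseteq\langle S\rangle_C$. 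The second stage is the mirror image: by induction on $B$ with $\{h_1,h_2\}\subseteq B\subseteq V(H)$ and $H[B]$ connected, one shows $V(G)\times B\subseteq\langle S\rangle_C$, now absorbing a new vertex $h\in V(H)\setminus B$ adjacent to $b\in B$ via the triangle $(g,h),(g,b),(g',b)$, where $g'$ is any neighbour of $g$ in $G$ (which exists since $G$ is nontrivial and connected). This yields $\langle S\rangle_C=V(G\boxtimes H)$, and together with the lower bound completes the proof.

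I do not expect a genuine obstacle here; the only thing requiring a little care is the bookkeeping in the two inductions — keeping the generated box of the form (connected set)$\,\times\,$(connected set) so that an adjacent vertex outside is always available, and checking that the needed ``second neighbour'' in the strong product exists at each step, which is exactly where nontriviality of the factors is used. The conceptual content is simply that in a strong product an ``L-shaped'' trio of box vertices together with one new neighbouring vertex always closes into a triangle.
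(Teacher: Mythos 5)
Your proof is correct and follows essentially the same approach as the paper: repeatedly closing triangles of the strong product to first fill two parallel $G$-layers (or $H$-layers) and then sweep across the other factor. The only cosmetic differences are that you seed with a diagonal pair $\{(g_1,h_1),(g_2,h_2)\}$ rather than the paper's same-layer pair $\{(g,h_1),(g,h_2)\}$, and that you spell out the (trivial) lower bound and the induction bookkeeping more explicitly.
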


\begin{proof}
Let $h_1h_2\in E(H)$ and $g\in V(G)$. We claim that $\{(g,h_1),(g,h_2)\}$ is a hull set of $G\boxtimes H$. By the definition of strong product, for any $g'\in N_G(g)$, the vertices $(g,h_1)$ and $(g,h_2)$ form triangles with both $(g',h_1)$ and $(g',h_2)$. Thus, $(g',h_1),(g',h_2)\in \langle\{(g,h_1),(g,h_2)\}\rangle_C$. This proves that $N_G(g)\times \{h_1,h_2\}\subseteq \langle\{(g,h_1),(g,h_2)\}\rangle_C$. Using similar arguments as above, $N_G(N_G(g))\times \{h_1,h_2\}$ is contained in $\langle N_G(g)\times \{h_1,h_2\}\rangle_C$. This sequentially shows that the vertices of the two $G$-layers, $G^{h_1}$ and $G^{h_2}$, are contained in $\langle\{(g,h_1),(g,h_2)\}\rangle_C$. Now $(g,h_1),(g',h_1)\in V(G^{h_1})\subseteq \langle\{(g,h_1),(g,h_2)\}\rangle_C$, where $g'\in N_G(g)$. Using the similar arguments as above, we get $V(^gH)\cup V(^{g'}H)\subseteq \langle (g,h_1),(g',h_1)\rangle_C$. This shows that $V(G\Box H)\subseteq\langle \{(g,h_1),(g,h_2)\}\rangle_C$ since both $G$ and $H$ are connected.     
\end{proof}

%\subsection{Lexicographic Product of Graphs}
\begin{theorem}\label{hulllexico}
    Let $G$ and $H$ be two nontrivial connected graphs. Then $$hn_{cc}(G\circ H)=2.$$
\end{theorem}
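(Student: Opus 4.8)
The plan is to exhibit an explicit two-element hull set, mirroring the strategy of Theorem~\ref{hullstrong} but exploiting the denser adjacency of the lexicographic product. First I would fix an edge $h_1h_2 \in E(H)$ and a vertex $g \in V(G)$, and show that $S = \{(g,h_1),(g,h_2)\}$ generates all of $V(G\circ H)$. The key observation is that in $G\circ H$, whenever $g' \sim g$ in $G$, the vertex $(g,h_1)$ is adjacent to $(g',h)$ for \emph{every} $h \in V(H)$; in particular $(g,h_1),(g,h_2)$ together with any $(g',h)$ form a triangle (using the edges $(g,h_1)(g',h)$, $(g,h_2)(g',h)$, and $(g,h_1)(g,h_2)$). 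Hence the entire layer $^{g'}H$ is absorbed into $\langle S\rangle_C$ as soon as $g' \in N_G(g)$, and in fact the whole of $\langle S \rangle_C$ already contains $(\{g\}\cup N_G(g)) \times V(H)$ after considering also the cycle formed within $^gH$ itself (since $H$ is connected and nontrivial, $^gH$ contains a cycle through any prospective new vertex once we have two adjacent vertices of it, or more directly every vertex of $^gH$ lies on a triangle with $(g,h_1),(g,h_2)$ via a common neighbor in a neighboring layer).

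Next I would iterate: once a full layer $^{g'}H$ is in the convex hull, the same argument applied with two adjacent vertices of that layer pulls in $^{g''}H$ for every $g'' \in N_G(g')$. Since $G$ is connected, a straightforward induction on distance from $g$ in $G$ shows that $^{g'}H \subseteq \langle S\rangle_C$ for all $g' \in V(G)$, i.e. $\langle S\rangle_C = V(G\circ H)$. This gives $hn_{cc}(G\circ H) \le 2$. The lower bound $hn_{cc}(G\circ H)\ge 2$ is immediate: a single vertex is always cycle convex (a one-element set generates nothing, as it lies on no cycle on its own), and $G\circ H$ has more than one vertex since both factors are nontrivial; therefore no one-element set can be a hull set.

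The main thing to be careful about — rather than a genuine obstacle — is making the "bootstrap" step fully rigorous: namely verifying that within a single layer we really can generate \emph{all} of $^{g'}H$ and not merely the two starting vertices. The cleanest route is to note that for any $h \in V(H)$ and any $g' \sim g$, the vertex $(g,h)$ lies on the triangle $(g,h),(g',h_1),(g',h_2)$ once $(g',h_1),(g',h_2) \in \langle S\rangle_C$ (these last two coming from the triangles with $(g,h_1),(g,h_2)$), so one actually alternates between layers twice. I would organize the induction so that the hypothesis is "the whole layer $^{g'}H$ is generated," which makes each inductive step a single clean application of: two adjacent generated vertices $(g',h_1),(g',h_2)$ plus any $(g'',h)$ with $g''\sim g'$ form a triangle. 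The rest is routine, and the existence of the required edge $h_1h_2$ is guaranteed since $H$ is nontrivial and connected.
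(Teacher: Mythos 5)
Your proposal is correct and follows essentially the same route as the paper: pick an edge $(g,h_1)(g,h_2)$ inside a single $H$-layer, observe that every vertex of $N_G(g)\times V(H)$ forms a triangle with these two, and propagate layer by layer using the connectivity of $G$. The paper's proof is just a terser version of this; your more careful bootstrap (recovering all of $^{g}H$ via a neighboring layer, then inducting on distance in $G$) fills in exactly the details the paper leaves implicit.
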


\begin{proof} Let $(g,h_1)(g,h_2)$ be any edge in $G\circ H$. Then $\langle \{(g,h_1),(g,h_2)\}\rangle_C$ contains $N_G(g)\times V(H)$. This sequentially proves that $V(G\circ H)\subseteq\langle \{(g,h_1),(g,h_2)\}\rangle_C$ and so $hn_{cc}(G\circ H)=2.$ 
\end{proof}

Next, we turn our attention to the cycle hull number of Cartesian product graphs. Unlike the strong and lexicographic products, this is a more involved problem that required several intermediate steps to establish general bounds.
    
\begin{lemma}\label{union of subproduct}
 Let $G$ and $H$ be two nontrivial connected graphs. Then any convex set $S$ in $G\Box H$ is of the form $\displaystyle S=\bigcup_{i=1}^{r}(S_i\times T_i)$, where each $S_i\subseteq V(G)$ and $T_i\subseteq V(H)$. 
\end{lemma}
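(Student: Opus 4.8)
My plan is to prove the sharper statement that every connected component of the induced subgraph $(G\Box H)[S]$ is itself a sub-product; the claimed decomposition then follows immediately by letting $C_1,\dots,C_r$ be the components of $(G\Box H)[S]$ and setting $S_i=\pi_G(C_i)$ and $T_i=\pi_H(C_i)$, so that in fact $S=\bigcup_{i=1}^{r}(S_i\times T_i)$ is a disjoint union in which each $G[S_i]$ and $H[T_i]$ is connected (being the projection of the connected graph $C_i$).

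The engine of the argument is the following characterization of convexity, which I would record first: for $w\in V(G\Box H)\setminus S$, we have $w\in\langle S\rangle$ if and only if $w$ has two distinct neighbours lying in one common connected component of $(G\Box H)[S]$. Indeed, a cycle of $(G\Box H)[S\cup\{w\}]$ through $w$ leaves $w$ along two edges, to neighbours $a\neq b$ of $w$ in $S$, and closes up along a path from $a$ to $b$ inside $(G\Box H)[S]$; conversely, two such neighbours joined by a path in $(G\Box H)[S]$ together with $w$ form a cycle through $w$. Since, by definition, $S$ is convex precisely when $\langle S\rangle=S$, convexity of $S$ is equivalent to the condition that no vertex outside $S$ has two neighbours in a single component of $(G\Box H)[S]$.

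Now fix a component $C$ and two of its vertices $(g_1,h_1),(g_2,h_2)$; it suffices to show $(g_1,h_2)\in C$, since the symmetric conclusion $(g_2,h_1)\in C$ then gives $C=\pi_G(C)\times\pi_H(C)$ (the reverse inclusion being trivial). Choose a walk from $(g_1,h_1)$ to $(g_2,h_2)$ inside $C$, and classify each of its edges as a $G$-move or an $H$-move according to the coordinate it changes. Whenever an $H$-move is immediately followed by a $G$-move, so the walk reads $(a,b),(a,b'),(a',b')$ with $b\sim_H b'$ and $a\sim_G a'$, the ``corner'' vertex $(a',b)$ is adjacent to both $(a,b)\in C$ and $(a',b')\in C$; by the characterization it cannot lie outside $S$, hence it lies in $S$, and being adjacent to a vertex of $C$ it lies in $C$. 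So I may reroute the walk through $(a',b)$, replacing that $H$-then-$G$ detour by a $G$-then-$H$ detour while staying inside $C$. Each such rerouting strictly decreases the number of inversions in the word of move-types, so after finitely many steps we obtain a walk inside $C$ along which every $G$-move precedes every $H$-move; such a walk necessarily passes through $(g_2,h_1)$. Running the symmetric procedure (pushing $H$-moves to the front) yields $(g_1,h_2)\in C$, as required.

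The only place where convexity of $S$ enters — and hence the crux of the argument — is this rerouting step: one must see that the corner vertex cannot be a vertex outside $S$, since it would then be generated by the two walk-vertices adjacent to it, contradicting convexity. Everything else (the termination of the straightening, and the passage from ``every component is a sub-product'' to the stated union) is routine bookkeeping.
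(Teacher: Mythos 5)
Your proof is correct and rests on the same core idea as the paper's: reduce to connected components, and show each component is a sub-product by observing that a ``corner'' vertex adjacent to two vertices of one component would be generated, hence must already lie in $S$. The only difference is bookkeeping --- you organize the corner-filling as a walk-straightening argument with an inversion count, whereas the paper runs an induction on the length of a connecting path; your version is arguably the cleaner packaging of the same mechanism.
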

\begin{proof}
First suppose that $S$ is connected. In the following we prove that $S$ is a subproduct of $G\Box H$. 
 Specifically, for any \( (g,h), (g',h') \in V(G \Box H) \), we need to show that \( \{g,g'\} \times \{h,h'\} \subseteq S \). For that, consider a path \( P: (g,h) = (x_1, y_1), (x_2, y_2), \ldots,\\ (x_k, y_k) = (g', h') \) between \( (g,h) \) and \( (g',h') \) in the induced subgraph of \( S \). 
We will use induction on \( k \).
For the case \( k = 1 \), without loss of generality the path \( P \) is given by \( (g,h), (g',h), (g',h') \). Since \( S \) is convex, it follows that \( (g, h') \in S \). Therefore, the result holds for \( k = 1 \).
Assume that the result holds for all paths of length \( i < k \). That is, if \( (x,y) \in S \) and there exists a path between \( (g,h) \) and \( (x,y) \) in the induced subgraph of \( S \) in \( G \Box H \) of length at most \( k \), then \( \{g,x\} \times \{h,y\} \subseteq S \).
Given this assumption to the  path $P$ implies that \( \{x_1, x_2, \ldots, x_{k-1}\} \times \{y_1, y_2, \ldots, y_{k-1}\} \subseteq S \). Now, since $(x_{k-1}, y_{k-1})$ and $(x_k, y_k)$ are adjacent in $G\Box H$, Hence without loss of generality, we may assume that $y_k = y_{k-1}$. This in turn implies that $(x_k, y_{k-1}) \in S$. From the path $P$ in $G\Box H$, it is clear that \( \pi_H(P) : y_1, y_2, \ldots, y_{k-1} \) forms a $h-h'$ walk in \( H \). Now, choose any \( y_i \) where \( 1 \leq i \leq k-2 \). Let \( Q \) be a \( y_{k-1} - y_i \) subwalk of \( \pi_H(P) \), say \( Q : y_{k-1}, v_1, v_2, \ldots, v_r = y_i \), where \( \{v_1, v_2, \ldots, v_r\} \subseteq \{y_1, y_2, \ldots, y_{k-1}\} \).
By the induction hypothesis, \( \{x_{k-1}\} \times \{v_1, v_2, \ldots, v_r\} \subseteq S \). This sequentially shows that \( (x_k, v_2), (x_k, v_3), \ldots, (x_k, v_r) \in S \). Therefore, \( \{x_k\} \times \{y_1, y_2, \ldots, y_{k-1}\} \subseteq S \).
This ensures that \( \{g,g'\} \times \{h,h'\} \subseteq S \). 

Now, consider the case that the induced subgraph of $S$ contains more than one component. Let $S=C_1\cup C_2\cup \ldots \cup C_r$, where each $C_i$'s are pairwise disjoint components in the induced subgraph of $S$. By the definition of the cycle convexity, each component $C_i$ ($i=1,2,\ldots, r$) is convex and so from the first part of this theorem, each $C_i$ ($i=1,2,\ldots, r$) is a subproduct of $G\Box H$. i.e., for $i=1,2,\ldots r$, $C_i=S_i\times T_i$, for some $S_i\subseteq V(G)$ and $T_i\subseteq V(H)$. Therefore $\displaystyle S=\bigcup_{i=1}^{r}(S_i\times T_i)$.
\end{proof}

\begin{theorem}\label{theo:StimesT}
Let $G$ and $H$ be two nontrivial connected graphs and let $S$ and $T$ be any two convex sets in $G$ and $H$ respectively. Then $S\times T$ is convex in $G\Box H$. 
\end{theorem}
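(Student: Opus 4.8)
The plan is to reduce the statement to the one-factor situation and then invoke the fact that cycle convexity, being a convexity in the sense of Section~\ref{sec:intro}, is closed under intersection. First I would record the combinatorial reformulation of cycle convexity that will do all the work: a set $A \subseteq V(G)$ is convex in $G$ if and only if every vertex $w \in V(G)\setminus A$ has at most one neighbour in each connected component of $G[A]$. This is immediate from the definition, since a cycle through $w$ in $G[A\cup\{w\}]$ exists exactly when $w$ has two distinct neighbours $a_1,a_2\in A$ that are joined by a path in $G[A]$, i.e.\ that lie in a common component of $G[A]$.

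The key lemma I would prove is: if $S$ is convex in $G$, then $S\times V(H)$ is convex in $G\Box H$. To see this, take $(g,h)\in V(G\Box H)\setminus (S\times V(H))$, so $g\notin S$. Every neighbour of $(g,h)$ in $G\Box H$ is either $(g',h)$ with $g'\sim g$ in $G$, or $(g,h')$ with $h'\sim h$ in $H$; a neighbour of the second type has first coordinate $g\notin S$, hence does not lie in $S\times V(H)$. Therefore the neighbours of $(g,h)$ inside $S\times V(H)$ are exactly the vertices $(g',h)$ with $g'\in N_G(g)\cap S$, all in a single $G$-layer. If $(g,h)$ were on a cycle of $(G\Box H)[(S\times V(H))\cup\{(g,h)\}]$, it would have two \emph{distinct} such neighbours $(g_1,h)\ne (g_2,h)$ joined by a path $P$ lying entirely in $S\times V(H)$. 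Projecting $P$ to $G$ yields a $g_1$--$g_2$ walk all of whose vertices lie in $S$ (each edge of $G\Box H$ projects to an edge or a single vertex of $G$), so $g_1$ and $g_2$ lie in the same component of $G[S]$; but both are neighbours of $g\notin S$, contradicting the convexity of $S$ in $G$. By the symmetric argument, $V(G)\times T$ is convex in $G\Box H$ whenever $T$ is convex in $H$.

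Finally I would conclude: since convex sets (in the cycle convexity) are closed under intersection and $S\times T = (S\times V(H))\cap(V(G)\times T)$, the set $S\times T$ is convex in $G\Box H$. I do not expect a real obstacle here; the two points needing care are (i) the observation that the neighbours of an ``outside'' vertex $(g,h)$ with $g\notin S$ are confined to one $G$-layer within $S\times V(H)$, which is what makes the projection argument run, and (ii) checking that the two neighbours produced by a hypothetical cycle are genuinely distinct, so that after projecting we obtain a bona fide forbidden configuration for $S$. If one prefers to avoid appealing to intersection-closedness, the same projection argument can be applied directly to $(g,h)\notin S\times T$, splitting according to whether $g\in S$ or $h\in T$ (at least one fails); the case $g\notin S$ and $h\notin T$ is trivial since then $(g,h)$ has no neighbour in $S\times T$ at all, and the remaining two cases are handled exactly as above using the convexity of $S$ in $G$ and of $T$ in $H$, respectively.
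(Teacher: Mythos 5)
Your proposal is correct, but it takes a genuinely different route from the paper. The paper argues directly on $S\times T$: it assumes an outside vertex $(g,h)$ has two distinct neighbours in one component of $(G\Box H)[S\times T]$ and runs a four-way case analysis on the types of those two neighbours (both in the $H$-layer, both in the $G$-layer, or mixed), using the convexity of $T$, of $S$, or the observation that in the mixed cases $(g,h)$ already lies in $S\times T$. You instead prove the ``cylinder'' special case --- $S\times V(H)$ is convex whenever $S$ is convex in $G$ --- via the neighbour-per-component reformulation and a projection of the connecting path, and then obtain the general statement from $S\times T=(S\times V(H))\cap(V(G)\times T)$ together with intersection-closedness of cycle convex sets (which does hold here, since the interval operator is monotone: a cycle in $(A\cap B)\cup\{w\}$ is a cycle in $A\cup\{w\}$). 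What your approach buys is that the mixed cases disappear entirely and the one place where care is genuinely needed --- namely that the two neighbours of the outside vertex must be joined by a path \emph{inside} the convex set before one can invoke convexity of the factor --- is handled explicitly by the projection-to-a-walk argument; the paper's Case~1 passes over this point rather tersely (``Since $h_1,h_2\in T$, $h\in T$'' implicitly uses that the connecting path in $S\times T$ projects to an $h_1$--$h_2$ walk in $T$). The paper's case analysis, on the other hand, is self-contained and does not need to appeal to the convexity-space axiom. Both your main argument and your fallback direct argument (splitting on whether $g\in S$ or $h\in T$) are sound.
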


\begin{proof}
Let $S$ and $T$ be any two convex sets in $G$ and $H$ respectively. Assume to the contrary that $S\times T$ is not convex in $G\Box H$. Then we can find a vertex $(g,h)\in V(G\Box H)\setminus (S\times T)$ such that $(g,h)$ must have at least two distinct neighbours say $(g_1,h_1)$ and $(g_2,h_2)$ in the same connected component of the induced subgraph of $S\times T$. Then we have to consider the following four cases.\\
{\bf Case 1}: $g=g_1=g_2$, $hh_1,hh_2\in E(H)$. Since $g=g_1=g_2$, the three vertices $(g,h), (g_1,h_1),(g_2,h_2)$ are lie on a single $H$-layer $^gH$ and $h$ is adjacent to both $h_1$ and $h_2$. Since $h_1,h_2\in T$, $h\in T$. Therefore $(g,h)\in S\times T$.\\
{\bf Case 2}: $h=h_1=h_2$ and $gg_1,gg_2\in E(G)$.
Here we also get a contradiction by the similar argument as above.\\
{\bf Case 3}:  $g=g_1$, $h=h_2$, $hh_1\in E(H)$ and $gg_2\in E(G)$.     Since $g_1,g_2\in S$ and $h_1,h_2\in V(H)$, $(g,h)=(g_1,h_2)\in (\{g_1,g_2\}\times \{h_1,h_2\})\subseteq (S\times T)$. Therefore Case 3 is not possible.
\\{\bf Case 4}: $g=g_2$, $h=h_1$, $hh_2\in E(H)$ and $gg_1\in E(G)$. This case is also not possible by the similar arguments of Case 3.
\end{proof}

\begin{lemma}\label{projections}
    Let $G$ and $H$ be two connected nontrivial graphs. If $S$ is a hull set of $G\Box H$, then $\pi_G(S)$ and $\pi_H(S)$ are hull sets of $G$ and $H$ respectively.
\end{lemma}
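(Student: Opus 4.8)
The plan is to argue by contraposition and reduce everything to Theorem~\ref{theo:StimesT}. By the symmetry of the two coordinates it suffices to prove that $\pi_G(S)$ is a hull set of $G$; the statement for $\pi_H(S)$ follows by the identical argument with the roles of $G$ and $H$ interchanged.

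First I would let $A$ be the cycle convex hull of $\pi_G(S)$ computed inside $G$, so $A$ is a convex set of $G$ containing $\pi_G(S)$. The key observation is that the full vertex set $V(H)$ is a convex set of $H$ (trivially, since there is no vertex outside $V(H)$ that could witness non-convexity), so Theorem~\ref{theo:StimesT} applies to the pair $(A, V(H))$ and yields that $A \times V(H)$ is convex in $G \Box H$. Next I would note the containment $S \subseteq \pi_G(S) \times \pi_H(S) \subseteq A \times V(H)$, which holds because every $(g,h)\in S$ has $g \in \pi_G(S) \subseteq A$ and $h \in \pi_H(S) \subseteq V(H)$. Hence $A \times V(H)$ is a convex set containing $S$, and therefore it contains $\langle S \rangle_C$. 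Since $S$ is a hull set of $G\Box H$, we get $A \times V(H) \supseteq \langle S \rangle_C = V(G\Box H)$, which forces $A = V(G)$. Thus $\pi_G(S)$ generates all of $V(G)$, i.e.\ it is a hull set of $G$.

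I do not expect a genuine obstacle here: the whole content of the lemma is carried by the already-proven fact that a product of convex sets is convex (Theorem~\ref{theo:StimesT}), plus monotonicity of the hull operator and the elementary inclusion $S \subseteq \pi_G(S)\times\pi_H(S)$. The only point deserving an explicit sentence is that $V(H)$ qualifies as a convex factor so that Theorem~\ref{theo:StimesT} can be invoked with $(A,V(H))$; everything else is routine.
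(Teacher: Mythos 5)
Your proposal is correct and follows essentially the same route as the paper: both arguments rest on Theorem~\ref{theo:StimesT} applied to the hull of the projection(s), the inclusion $S \subseteq \pi_G(S)\times\pi_H(S)$, and the minimality of $\langle S\rangle_C$ among convex sets containing $S$. If anything, your version (taking $A\times V(H)$ for one coordinate at a time) is stated a little more carefully than the paper's, which writes an equality $\langle S\rangle_C = \langle\pi_G(S)\rangle_C\times\langle\pi_H(S)\rangle_C$ where only an inclusion is needed.
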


\begin{proof}
    Let $S$ be a hull set of $G\Box H$. Then $S\subseteq \pi_G(S)\times \pi_H(S)\subseteq \langle \pi_G(S)\rangle_C \times \langle \pi_H(S)\rangle_C$. By Theorem~\ref{theo:StimesT} we get $\langle S\rangle_C = \langle \pi_G(S)\rangle_C \times \langle \pi_H(S)\rangle_C = V(G\Box H)$ and hence $\langle \pi_G(S)\rangle_C = V(G)$ and $\langle\pi_H(S)\rangle_C = V(H)$. 
\end{proof}

\begin{lemma}\label{line_column}
    Let $G$ and $H$ be two connected nontrivial graphs and $S \subseteq V(G \Box H)$. If $V(G^h) \cup V(^gH) \subseteq \langle S \rangle_C$, for some $g \in V(G)$ and $h \in V(H)$, then $S$ is a hull set of $G \Box H$.
\end{lemma}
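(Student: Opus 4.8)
The plan is to show that once a full $G$-layer $G^h$ and a full $H$-layer ${}^gH$ are inside the cycle convex hull of $S$, the hull "sweeps across" the entire product, i.e.\ $\langle S\rangle_C = V(G\Box H)$. The natural approach is a double sweep: first use $G^h$ together with each vertex of ${}^gH$ to generate every $G$-layer $G^{h'}$, and then symmetrically use the $H$-layers to fill in any remaining vertices. Since $\langle S\rangle_C$ is itself cycle convex, it suffices to work with the convex set $\langle S\rangle_C$ and show it contains everything.

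First I would fix $h' \in V(H)$ adjacent to $h$ (since $H$ is connected, it is enough to handle one step and then iterate along a $h$--$h'$ path in $H$). The key observation is that the vertex $(g,h')$ lies in $\langle S\rangle_C$ because $(g,h'), (g,h) \in {}^gH \subseteq \langle S\rangle_C$ and $(g,h)\in G^h$; more to the point, for \emph{every} $g'\in V(G)$, the four vertices $(g',h), (g,h), (g,h')$ together with a shortest $g'$--$g$ path in $G^h$ form a cycle through the "new" vertex, so $(g',h')$ gets generated. Concretely: $(g,h')$ is already in the hull; then $(g'',h')$ for $g''\sim g$ is generated because $(g'',h'), (g,h'), (g,h), (g'',h)$ is a $4$-cycle (using edges $(g,h')\sim(g,h)$, $(g,h)\sim(g'',h)$, $(g'',h)\sim(g'',h')$, $(g'',h')\sim(g,h')$) and the other three of these four vertices lie in $G^h \cup {}^gH \cup \{(g,h')\} \subseteq \langle S\rangle_C$, so $(g'',h')$ forms a cycle with vertices of the hull and hence is generated. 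Iterating this along a BFS of $G$ (starting from $g$, where at each step the already-generated portion of layer $G^{h'}$ plus all of $G^h$ gives the needed cycles) shows $V(G^{h'})\subseteq \langle S\rangle_C$. Then iterating over a spanning walk of $H$ from $h$ yields $V(G^{h'})\subseteq\langle S\rangle_C$ for \emph{all} $h'\in V(H)$, which is all of $V(G\Box H)$.

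The main obstacle, and the point requiring the most care, is making the induction on the distance in $G$ (and then in $H$) precise: at each step one must exhibit an actual cycle in the induced subgraph of $\langle S\rangle_C \cup \{(g'',h')\}$ passing through the newly added vertex $(g'',h')$, not merely a closed walk. Using a $4$-cycle built from one edge in an $H$-layer, one edge in the target $G$-layer between $g''$ and an already-covered neighbor, and returning through $G^h$, avoids chord/repeated-vertex issues as long as we always pick $g''$ adjacent to a vertex $g'$ already handled; the fact that $G^h$ is \emph{entirely} in the hull is what guarantees the return path exists regardless of which neighbor we picked. A clean way to organize this is: prove by induction on $k$ that if $(g',h')\in\langle S\rangle_C$ and $g''\sim g'$ in $G$ with $(g'',h)\in\langle S\rangle_C$ (automatic since $G^h\subseteq\langle S\rangle_C$), then $(g'',h')\in\langle S\rangle_C$ via the $4$-cycle $(g',h'),(g',h),(g'',h),(g'',h'),(g',h')$; wait --- this needs $(g',h)\in G^h\subseteq\langle S\rangle_C$ and $(g'',h)\in G^h\subseteq\langle S\rangle_C$, both of which hold, so the induction base and step are uniform. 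Thus the only genuinely new ingredient beyond the earlier results is this $4$-cycle argument, repeated; the connectivity of $G$ and $H$ does the rest.
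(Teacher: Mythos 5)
Your proof is correct and is essentially the paper's own argument: both rely on the same unit $4$-cycle (one $G$-edge times one $H$-edge, with no chords possible in the Cartesian product) to propagate membership in $\langle S\rangle_C$ from a completed layer to an adjacent one, using connectivity of $G$ and $H$ to sweep the whole product. The only difference is cosmetic: you fill in the $G$-layers $G^{h'}$ first and then move through $H$, while the paper fills in the $H$-layers $^{g'}H$ first and then moves through $G$.
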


\begin{proof}
We will prove for each vertex $g' \in N_G(g)$, $V(^{g'}H) \subseteq \langle S\rangle_C$. For each vertex $h' \in N_H(h)$, we have $(g',h')\in \langle \{(g,h),(g',h),(g,h')\}\rangle$. Therefore $(g',h') \in \langle S\rangle_C$. Now for any $h'' \in N_H(h')$, $(g',h'') \in \langle\{(g,h''), (g,h'),(g',h')\}\rangle$ and we get $(g',h'')\in \langle S\rangle_C$. By continuing this process, we obtain $V(^{g'}H)\subseteq \langle S\rangle_C$. Analogously, we can prove that for any $g'' \in N_G(g')$, $V(^{g''}H) \subseteq \langle S\rangle_C$. By following these steps repeatedly, we can conclude that $V(^{g'}H)\subseteq \langle S\rangle_C$, for all $g' \in V(G)$. Therefore $\langle S\rangle_C = V(G\Box H)$. 
\end{proof}

\begin{theorem}\label{hull1}
Let $G$ and $H$ be two nontrivial connected graphs. Then $$\max \{hn_{cc}(G), hn_{cc}(H),3\}\leq hn_{cc}(G\Box H)\leq hn_{cc}(G)+hn_{cc}(H)-1.$$
\end{theorem}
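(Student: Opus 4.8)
The plan is to prove the two bounds separately; the workhorse for both will be a simple \emph{layer-confinement} observation about cycle generation in a Cartesian product.

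For the lower bound, the inequalities $hn_{cc}(G\Box H)\ge hn_{cc}(G)$ and $hn_{cc}(G\Box H)\ge hn_{cc}(H)$ are immediate: if $S$ is a hull set of $G\Box H$ then, by Lemma~\ref{projections}, $\pi_G(S)$ is a hull set of $G$, so $|S|\ge|\pi_G(S)|\ge hn_{cc}(G)$, and symmetrically for $H$. To get $hn_{cc}(G\Box H)\ge 3$, I would assume $S=\{u,v\}$ is a hull set and derive a contradiction. If $u\not\sim v$ then $G\Box H[\{u,v\}]$ has no cycle, so $\langle S\rangle_C=\{u,v\}\ne V(G\Box H)$, a contradiction. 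If $u\sim v$, then since every edge of $G\Box H$ fixes one coordinate, $u$ and $v$ lie together in a single layer, say (using $G\Box H\cong H\Box G$) an $H$-layer $^{g}H$. The key claim is: whenever $S'\subseteq V(^{g}H)$, also $\langle S'\rangle\subseteq V(^{g}H)$. This holds because any $w=(g',h')$ with $g'\ne g$ is adjacent to at most one vertex of $^{g}H$, namely $(g,h')$, hence $w$ cannot lie on a cycle all of whose other vertices lie in $^{g}H$. Iterating the claim gives $\langle S\rangle_C\subseteq V(^{g}H)\subsetneq V(G\Box H)$ (proper since $G$ is nontrivial), contradicting that $S$ is a hull set.

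For the upper bound, I would fix minimum hull sets $A$ of $G$ and $B$ of $H$, choose $g_1\in A$ and $h_1\in B$, and take $S=(A\times\{h_1\})\cup(\{g_1\}\times B)$, which has exactly $|A|+|B|-1=hn_{cc}(G)+hn_{cc}(H)-1$ vertices (the sets overlap only in $(g_1,h_1)$). It then suffices to show $S$ is a hull set. Applying the confinement observation to the $G$-layer $G^{h_1}$, the generation process started from $A\times\{h_1\}$ never leaves $V(G^{h_1})$; since $G^{h_1}$ is an isomorphic copy of $G$ and, inside it, $w\in\langle S'\rangle$ is equivalent to $\pi_G(w)\in\langle\pi_G(S')\rangle$ in $G$, and since $A$ is a hull set of $G$, we obtain $\langle A\times\{h_1\}\rangle_C=V(G^{h_1})$. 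Symmetrically $\langle\{g_1\}\times B\rangle_C=V(^{g_1}H)$. By monotonicity of the hull, $\langle S\rangle_C\supseteq V(G^{h_1})\cup V(^{g_1}H)$, so Lemma~\ref{line_column} (with $g=g_1$, $h=h_1$) gives $\langle S\rangle_C=V(G\Box H)$, and therefore $hn_{cc}(G\Box H)\le|S|=hn_{cc}(G)+hn_{cc}(H)-1$.

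The only delicate point, and the one I would state most carefully, is the confinement claim together with the remark that inside a single layer the cycle-interval operator of $G\Box H$ restricts exactly to that of the corresponding factor. In particular one should resist the tempting shortcut ``a $2$-set generates nothing because $G\Box H$ is triangle-free'': this is false, since $G\Box H$ inherits every triangle of $G$ or of $H$ inside a layer. It is precisely the confinement argument --- not triangle-freeness --- that forces $hn_{cc}(G\Box H)\ge 3$, and it is also what makes the construction $S=(A\times\{h_1\})\cup(\{g_1\}\times B)$ succeed for the upper bound.
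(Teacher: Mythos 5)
Your proposal is correct and follows essentially the same route as the paper: the same construction $(A\times\{h_1\})\cup(\{g_1\}\times B)$ of size $hn_{cc}(G)+hn_{cc}(H)-1$ for the upper bound via Lemma~\ref{line_column}, and the same lower-bound argument combining Lemma~\ref{projections} with the observation that a $2$-set is either independent or has its hull trapped in a single layer. Your explicit layer-confinement claim (a vertex outside a layer has at most one neighbour inside it, so cannot be generated) is a welcome piece of extra rigour that the paper leaves implicit.
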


\begin{proof}
For the upper bound, Let $\{g_1,g_2,\ldots,g_r\}$ and $\{h_1,h_2,\ldots h_s\}$ be  two minimum hull sets of $G$ and $H$ respectively. Then $hn_{cc}(G)=r$ and $hn_{cc}(H)=s$. Consider the set $S=\{(g_1,h_1), (g_1,h_2),\ldots (g_1,h_s), (g_2,h_s),(g_3,h_s) \ldots ,(g_r,h_s)\}$. Since $\langle \{g_1,g_2,\ldots,g_r\}\rangle_C=V(G)$, $\langle\{(g_1,h_s), (g_2,h_s), \ldots ,(g_r,h_s)\}\rangle_C =V(G^{h_s})$. Also since $\langle\{h_1,h_2,\ldots, h_s\}\rangle_C=V(H)$, $\langle \{(g_1,h_1), (g_1, h_2),\ldots, (g_1,h_s)\} \rangle_C=V(^{g_1}H)$. %We will prove for each vertex $g\in N_G(g_1)$, $V(^gH)\subseteq \langle S\rangle_C$. Now for each vertex $h\in N_H(h_s)$, we have $(g,h)\in \langle \{(g_1,h_s),(g_1,h),(g,h_s)\}\rangle$. Therefore $(g,h)\in \langle S\rangle_C$. Now for any $h'\in N_H(h)$, $(g,h')\in \langle\{(g_1,h), (g_1,h'),(g,h)\}\rangle$ and we get $(g,h')\in \langle S\rangle_C$. By continuing this process, we obtain $V(^{g}H)\subseteq \langle S\rangle_C$. Analogously, we can prove that for any $g'\in N_G(g)$, $V(^{g'}H)\subseteq \langle S\rangle_C$. By following these steps repeatedly, we can conclude that $V(^gH)\subseteq \langle S\rangle_C$, for all $g\in V(G)$. 
Hence, by Lemma~\ref{line_column} we conclude that $\langle S\rangle_C =V(G\Box H)$. 
On the other hand, if $G$ and $H$ are two nontrivial connected graphs, then any 2-element subset of $V(G\Box H)$ is either an independent set of $G\Box H$ or its convex hull is contained in a single layer. This shows that any hull set in $G\Box H$ need at least three vertices. Hence the lower bound immediately follows from Lemma \ref{projections}.
\end{proof} 

Theorem \ref{hull1} immediately shows that $hn_{cc}(G\Box H)=3$ when both factors have hull number equals two. This applies, for instance, to complete graphs, where $hn_{cc}(K_m \Box K_n) = 3$.
%In particular, if both $G$ and $H$ are connected chordal graphs, then $hn_{cc}(G\Box H)=3$.
When at least one of the factors has hull number equals two, some bounds are provided by the next theorem.

\begin{theorem}\label{theo:bounds_with_hnccH_two}
    Let $G$ and $H$ be two nontrivial connected graphs with $hn_{cc}(H)=2$. Then $hn_{cc}(G)\leq hn_{cc}(G\Box H)\leq hn_{cc}(G)+1$. Moreover $ hn_{cc}(G\Box H)= hn_{cc}(G)$ if and only if there is a minimum hull set $S$ in $G$ such that $S$ can be partitioned into two sets $S_1$ and $S_2$ with $\langle S_1 \rangle_C \cap \langle S_2 \rangle_C \neq\emptyset$.
\end{theorem}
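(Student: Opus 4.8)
The plan is to obtain the two inequalities directly from Theorem~\ref{hull1} and then prove the characterization in two directions. Write $r:=hn_{cc}(G)$. Since $hn_{cc}(H)=2$, Theorem~\ref{hull1} reads $\max\{r,2,3\}\le hn_{cc}(G\Box H)\le r+2-1$, i.e. $r\le hn_{cc}(G\Box H)\le r+1$, which is the first assertion.

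For the implication ``$hn_{cc}(G\Box H)=r\Rightarrow$ the partition exists'', I would take a minimum hull set $S'$ of $G\Box H$, so $|S'|=r$. By Lemma~\ref{projections}, $\pi_G(S')$ is a hull set of $G$, hence $r\le|\pi_G(S')|\le|S'|=r$; thus $\pi_G$ is injective on $S'$, the set $S:=\pi_G(S')$ is a minimum hull set of $G$, and $S'$ splits along $H$-layers as $S'=\bigcup_{j=1}^{k}(S_{h_j}\times\{h_j\})$ with the $S_{h_j}$ pairwise disjoint, $\bigcup_j S_{h_j}=S$, and $k\ge 2$ (because $\pi_H(S')$ is a hull set of $H$ and $hn_{cc}(H)=2$). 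Set $\mathcal A:=\bigcup_j(\langle S_{h_j}\rangle_C\times\{h_j\})$; it contains $S'$, and since the $|S_{h_j}|$ sum to $r$ while a hull set of $G$ has at least $r$ vertices, at least one $\langle S_{h_j}\rangle_C$ is proper in $V(G)$, so $\mathcal A\subsetneq V(G\Box H)$ and $\mathcal A$ is not convex. Choose $(a,b)\notin\mathcal A$ closing a cycle $C$ with vertices of $\mathcal A$. If $C$ lay inside a single layer $G^b$, then (projecting $G^b$ isomorphically onto $G$) $\pi_G(C)$ would be a cycle of $G$ through $a$ with the remaining vertices in $\langle S_b\rangle_C$, forcing $a\in\langle S_b\rangle_C$ and hence $(a,b)\in\mathcal A$ — impossible; so $C$ visits at least two layers and therefore has at least two edges joining vertices with equal $G$-coordinate. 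Such an edge not incident to $(a,b)$ has both ends in $\mathcal A$, witnessing $\langle S_{h_i}\rangle_C\cap\langle S_{h_j}\rangle_C\neq\emptyset$ for two distinct used layers; the only alternative is that $C$ has exactly two such edges and both are at $(a,b)$, whose neighbours $(a,b_1),(a,b_2)\in\mathcal A$ again give $a\in\langle S_{b_1}\rangle_C\cap\langle S_{b_2}\rangle_C$ with $b_1\neq b_2$. In either case, taking $S_1:=S_{h_i}$ and $S_2:=S\setminus S_{h_i}\supseteq S_{h_j}$ gives a partition of $S$ with $\langle S_1\rangle_C\cap\langle S_2\rangle_C\supseteq\langle S_{h_i}\rangle_C\cap\langle S_{h_j}\rangle_C\neq\emptyset$.

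For the converse, suppose $S=S_1\cup S_2$ is a minimum hull set of $G$ with $g^*\in\langle S_1\rangle_C\cap\langle S_2\rangle_C$. I would first record the fact (a short induction on the generation process, using that each closing cycle lies in one component of the induced subgraph) that a cycle-convex hull splits as the disjoint union of $\langle X\cap C\rangle_C=C$ over the components $C$ of $G[\langle X\rangle_C]$. Replacing $S_1$ by $S_1\cap K$, where $K$ is the component of $g^*$ in $G[\langle S_1\rangle_C]$, and $S_2$ by $S\setminus(S_1\cap K)$, preserves minimality, the partition, and membership of $g^*$ in both hulls, and now $\langle S_1\rangle_C=K$ is connected. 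Fix a minimum hull set $\{h_1,h_2\}$ of $H$ (necessarily $h_1h_2\in E(H)$) and put $S':=(S_1\times\{h_1\})\cup(S_2\times\{h_2\})$, of size $|S|=r$. Generating inside the layers $G^{h_1}$ and $G^{h_2}$ gives $\langle S_1\rangle_C\times\{h_1\}\subseteq\langle S'\rangle_C$ and $\langle S_2\rangle_C\times\{h_2\}\subseteq\langle S'\rangle_C$; in particular $(g^*,h_1),(g^*,h_2)\in\langle S'\rangle_C$, and generating inside ${}^{g^*}H\cong H$ (where $\{h_1,h_2\}$ is a hull set) puts the full $H$-layer $V({}^{g^*}H)$ into $\langle S'\rangle_C$. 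Walking along a path from $g^*$ to an arbitrary $g$ in the connected graph $G[\langle S_1\rangle_C]$, each step has a seed $(\cdot,h_1)\in\langle S'\rangle_C$ available, and a ``ladder'' of $4$-cycles then promotes the successive $H$-layers ${}^{p}H$ into $\langle S'\rangle_C$, so $\langle S_1\rangle_C\times V(H)\subseteq\langle S'\rangle_C$. Hence $(\langle S_1\rangle_C\cup\langle S_2\rangle_C)\times\{h_2\}\subseteq\langle S'\rangle_C$, and since $\langle S_1\rangle_C\cup\langle S_2\rangle_C\supseteq S$ this generates all of $V(G^{h_2})$ inside $\langle S'\rangle_C$. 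Now $\langle S'\rangle_C$ contains the full layers $V(G^{h_2})$ and $V({}^{g^*}H)$, so Lemma~\ref{line_column} gives $\langle S'\rangle_C=V(G\Box H)$; thus $hn_{cc}(G\Box H)\le r$, and with the lower bound, equality.

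\textbf{Main obstacle.} The delicate point is the converse: the $4$-cycle ladder propagates cleanly only when $\langle S_1\rangle_C$ induces a connected subgraph, whereas cycle-convex hulls may be disconnected, so the reduction via the component-decomposition of $\langle X\rangle_C$ is essential — and it matters that only \emph{one} of $\langle S_1\rangle_C,\langle S_2\rangle_C$ need be connected for the argument to close. In the forward direction, the corresponding care is in the case analysis of the closing cycle $C$: excluding the single-layer case and checking that a cross-layer $C$ always exposes a common element of two distinct column-hulls, even when all of $C$'s ``vertical'' edges are concentrated at the escaping vertex $(a,b)$.
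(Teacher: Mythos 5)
Your proof is correct. For the bounds and for the sufficiency direction (partition $\Rightarrow$ equality) you follow essentially the paper's route: the paper likewise splits the seed set across two adjacent $H$-layers according to the partition, localizes to a connected component of one of the two hulls containing the common point (it restricts to a component $B_1$ of $\langle S_2 \rangle_C$ where you restrict to the component $K$ of $\langle S_1 \rangle_C$ — the same device), runs the $4$-cycle ladder, and closes with Lemma~\ref{line_column}; like you, it tacitly uses the fact that the hull of the seeds lying inside a component of $G[\langle X\rangle_C]$ equals that whole component, which you at least state and justify. Where you genuinely diverge is the necessity direction. The paper argues by contradiction: if all pairwise intersections $\langle S_i\rangle_C\cap\langle S_j\rangle_C$ were empty, it asserts in one line that the hull of the seeds outside layer $h_i$ never enters $V(G)\times\{h_i\}$, so $S_i$ alone would be a hull set of $G$, which is too small. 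Your argument turns exactly that assertion into a proof: you form the union $\mathcal A$ of the layerwise hulls, note it contains the product hull set yet is proper (each layer class has fewer than $hn_{cc}(G)$ seeds), hence is not convex, and analyze the escaping cycle — the observation that a cycle meeting two layers carries at least two vertical edges, together with the split on whether both of them are concentrated at the escaping vertex, correctly exposes a common $G$-vertex of two distinct layerwise hulls in every case. This is more work than the paper's one-liner, but it is precisely the justification the paper's ``Consequently'' glosses over, so your version of that direction is the more self-contained of the two.
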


\begin{proof}
Suppose $S= S_1 \cup S_2$  with $\langle S_1 \rangle_C \cap \langle S_2 \rangle_C \neq \emptyset$. Let $\{h,h'\}$ be a hull set of $H$ and let $g \in \langle S_1 \rangle_C \cap \langle S_2 \rangle_C$. Let $A_1,A_2, \dots, A_r$ be the components of $\langle S_1 \rangle_C$ and $B_1,B_2, \dots, B_s$ be the components of $\langle S_2 \rangle_C$. Assume without loss of generality that $g \in A_1 \cap B_1$. Let $T= ((S \setminus V(B_1)) \times \{h\}) \cup ((S \cap V(B_1)) \times \{h'\})$. In the following, we prove that $T$ is a hull set of $G \Box H$.

\smallskip
\noindent $\textbf{Claim 1: } V(G) \times \{h\} \subseteq \langle T \rangle_C$.

\smallskip
\noindent \textit{Proof of Claim~1.} Let $g' \in V(B_1) \setminus \{g\}$ and $P:g=g_1,g_2,...,g_k=g'$ be a $g-g'$ path in $B_1$. Then $(g_i,h') \in \langle T \rangle_C$ for all $i$ with $1 \leq i \leq k$. Since $(g_1,h),(g_1,h'),(g_2,h') \in \langle T \rangle_C$, we gain $(g_2,h) \in \langle T \rangle_C$. Similarly, given that $(g_2,h),(g_2,h'),(g_3,h') \in \langle T \rangle_C$, we obtain $(g_3,h) \in \langle T \rangle_C$. By continuing this way, we get $(g_k,h) \in \langle T \rangle_C$. This shows that $V(B_1) \times \{h\} \subseteq \langle T \rangle_C$. Thus $V(G) \times \{h\} \subseteq \langle T \rangle_C$. \hfill $\blacksquare$

\medskip
Since $g \in A_1 \cap B_1$, it follows that $(g,h),(g,h') \in \langle T \rangle_C$.
Given that $\{h,h'\}$ is a hull set of $H$,  we have $V(^gH) \subseteq \langle \{(g,h),(g,h')\} \rangle_C$. Furthermore, by Claim~1, $V(G^h) \subseteq \langle T \rangle_C$. Hence Lemma~\ref{line_column} implies that $T$ is a hull set of $G \Box H$. 

% \smallskip
% \noindent $\textbf{Subclaim 2: }V(G) \times h' \subseteq \langle T \rangle$. 

% Recall $(g,h')\in T$. Choose $g' \in V(G)$ be such that $g' \neq g$. Let $Q:g=g_1,g_2,...,g_h=g'$ be a path in $G$. Then by sub claim 1, $(g_i,h)\in \langle T \rangle$ for all $i$ with $1 \leq i \leq k$. Since $(g_1,h'),(g_1,h),(g_2,h) \in \langle T \rangle$, we obtain $(g_2,h') \in \langle T \rangle$. As above, we get $(g_k,h') \in \langle T \rangle$. Hence $V(G)\times h' \subseteq \langle T \rangle$.\\
% $\textbf{Claim : } V(G) \times I^k[h,h'] \subseteq I^k[T]$.
% We apply induction in $k$. Consider $k=1$ and let $v \in I[h,h']$. Then we obtain a three cycle $C:(g,h),(g',h),(g,v)$ in $G\Box H$ with $(g,h),(g',h) \in T_1$. This proves the case for $k=1$. Assume that $V(G) \times I^k\{h,h'\} \subseteq I^k[T_1] $. Choose $v \in I^{k+1}\{h,h'\}$. Then $v$ belongs to a cycle $C$ in $H$, say $C:v,h_1,h_2,...,h_k,v$ with $h_1,h_2,...,h_k \in I^k\{h,h'\}$. Thus for any $g \in V(G), (g,v),(g,h_1)...,(g,h_k),(g,v)$ forms a cycle in $G \Box H$ with $(g,h_1),...,(g,h_k) \in V(G) \times I^k\{h,h'\} \subseteq I^k[T_1]$ (by induction). Hence $(g,v) \in I^{k+1}[T_1]$. This completes the claim. Hence it follows that $T$ is a hull set in $G \Box H$ of cardinality $h(G)$.

\medskip
Conversely, assume that $hn_{cc}(G \Box H)=hn_{cc}(G)$. Let $T$ be a hull set of $G \Box H$ of size $hn_{cc}(G)$. Since ${\pi}_G(T)$ is a hull set of $G$, it follows that $|\pi_G(T)|=|T|$. Define $T=(S_1 \times \{h_1\}) \cup (S_2 \times \{h_2\}) \cup...(S_k\times \{h_k\})$ and let $\{h,h'\}$ be a hull set of $H$. Also, consider  $S=\pi_G(T)=S_1\cup S_2 \cup...\cup S_k$, which is a hull set of $G$ of size $hn_{cc}(G)$.

\smallskip
\noindent $\textbf{Claim 2: }$  $S$ can be partitioned into $U \cup V$ such that $\langle U \rangle_C \cap \langle V \rangle_C \neq \emptyset.$

\smallskip
\noindent \textit{Proof of Claim~2.} Suppose by contradiction that $\langle S_i \rangle_C \cap \langle S_j \rangle_C = \emptyset$ for every distinct $i, j \in \{1, \dots, k\}.$
Consequently $\langle \cup_{j \neq i}(S_j \times \{h_j\}) \rangle_C \cap (V(G) \times \{h_i\})=\emptyset$.
This shows that $\langle S_i \times \{h_i\} \rangle_C=V(G) \times \{h_i\}$ and so $S_i$ is a hull set of $G$, a contradiction. Hence $\langle S_i \rangle_C \cap \langle S_j \rangle_C = \emptyset$ for some pair $i, j \in \{1, \dots, k\}$, $i \neq j$. Without loss of generality, fix $i=1$ and $j=2.$ Then $S=U \cup V$, where $U=S_1$ and $V=S_2 \cup S_3 \cup...\cup S_k$ with $\langle U \rangle_C \cap \langle V \rangle_C \neq \emptyset.$  \hfill $\blacksquare$

\smallskip
Claims~1 and~2 imply in the bounds $hn_{cc}(G)\leq hn_{cc}(G\Box H)\leq hn_{cc}(G)+1$.
\end{proof}

We now focus on the complexity of determining the hull number in cycle convexity for Cartesian product graphs. To this end, we first formally recall the associated decision problem as stated in \cite{araujo2024hull}.

\begin{problem}{\textsc{Hull Number in Cycle Convexity}}\\
\textbf{Instance:} A graph $G$ and a positive integer $k$.\\
\textbf{Question:} Is $hn_{cc}(G) \leq k$? 
\end{problem}

As far as we know, the complexity of \textsc{Hull Number in Cycle Convexity} for bipartite graphs was open~\cite{araujo2024hull}. Theorem~\ref{theo:bipartiteNP-complete} fills this gap in the literature and also is useful in Theorem~\ref{theo:cartesianNPComplete} to prove that the problem remains NP-complete even for bipartite Cartesian product graphs.

\begin{theorem}\label{theo:bipartiteNP-complete}
\textsc{Hull number in Cycle Convexity} remains NP-complete on bipartite graphs.
\end{theorem}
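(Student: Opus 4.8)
The plan is to reduce from a known NP-complete problem about cycle convexity on general graphs. The most natural candidate is \textsc{Hull Number in Cycle Convexity} itself on arbitrary graphs, shown NP-complete by Araujo et al.~\cite{araujo2024hull} (indeed already for planar graphs), but one could equally start from a classical NP-complete problem such as \textsc{Vertex Cover} or \textsc{3-SAT} if a direct graph-to-graph transformation is cleaner. The key idea is a \emph{subdivision-type} gadget: given an arbitrary instance $(G,k)$, construct a bipartite graph $G'$ by subdividing edges (or replacing each edge by a suitable even gadget) so that the cycle structure of $G$ is faithfully mirrored in $G'$. Since a graph is bipartite iff it has no odd cycle, subdividing every edge once already destroys all odd cycles; the point is to do it in a way that lets us control $hn_{cc}(G')$ as an explicit function of $hn_{cc}(G)$ (and of the number of subdivided edges, which contribute forced ``new'' vertices that must be generated).

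First I would fix the gadget and verify membership in NP, which is immediate: a certificate is a hull set $S$ of size at most $k$, and one can check in polynomial time (by iterating the cycle-interval operator to a fixed point) that $\langle S\rangle_C = V(G')$. Next I would analyze how cycles behave under the transformation: a cycle through a subdivision vertex $w$ in $G'$ corresponds precisely to a cycle through the original edge in $G$, so the generation process in $G'$ mimics that in $G$ once the subdivision vertices are accounted for. The heart of the argument is the two-way correspondence: (i) from a hull set $S$ of $G$, build a hull set $S'$ of $G'$ of controlled size by adding, for each subdivided edge, one of its endpoints or the subdivision vertex as needed so that every $w$ gets generated; (ii) conversely, from a hull set $S'$ of $G'$, project/clean it to a hull set $S$ of $G$ of size at most $k$, arguing that subdivision vertices in $S'$ can be ``pushed'' to original vertices without increasing the size and without losing the hull property. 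Combining the two directions yields $hn_{cc}(G)\le k \iff hn_{cc}(G')\le k'$ for the appropriate $k'$.

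The main obstacle I anticipate is direction (ii): a minimum hull set of $G'$ may genuinely prefer to include subdivision vertices, and a subdivision vertex has degree two, so it can never be generated by a cycle (a cycle through a degree-two vertex would force both its neighbours plus the vertex into a triangle, impossible in a bipartite subdivided graph unless the gadget is larger). This means every subdivision vertex of degree two is forced into \emph{every} hull set, which would blow up $k'$ uncontrollably and make the reduction vacuous. The fix is to choose the gadget so that the ``new'' vertices are \emph{not} of degree two but can be generated from the original vertices via short cycles inside the gadget — for instance replacing each edge $uv$ by a $4$-cycle $u,x,v,y$ (keeping bipartiteness), so that $x$ and $y$ are generated as soon as $u$ and $v$ are in the hull, and conversely any cycle of $G$ through $uv$ lifts to a cycle of $G'$ through $x$ (or $y$). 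With this gadget the forced-vertex problem disappears and the size bookkeeping becomes $k' = k$ (or $k$ plus a fixed additive constant coming from connectivity issues, handled exactly as in Theorem~\ref{hull1}). I would then carefully check the edge cases where $G$ has very small hull number, and confirm that $G'$ is bipartite by exhibiting the bipartition explicitly.
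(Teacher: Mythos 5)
Your submission is a plan rather than a proof, and its central gadget does not work. In cycle convexity a vertex $w$ is generated by the current hull $X$ only if $w$ lies on a cycle of $G'[X\cup\{w\}]$, i.e.\ only if $w$ has two neighbours lying in the \emph{same connected component} of $G'[X]$. If you replace each edge $uv$ of $G$ by a $4$-cycle $u,x,v,y$ (deleting $uv$ to keep bipartiteness), then $u$ and $v$ become non-adjacent, so $x$ is generated only after $u$ and $v$ are joined by a path inside the hull; the only local connector is $y$, whose own generation needs $x$ -- a circularity. Worse, every neighbour of an original vertex $w$ in $G'$ is a gadget vertex $x_{wa}$ or $y_{wa}$ whose generation presupposes that $w$ is \emph{already} in the hull; hence for any $w\notin S'$ to ever be generated, at least two gadget vertices adjacent to $w$ must lie in $S'$ from the start. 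This forces $|S'|=\Omega(|V(G)|)$ independently of $hn_{cc}(G)$, so the claimed bookkeeping $k'=k$ (or $k$ plus a constant) is false; e.g.\ $hn_{cc}(K_3)=2$ while the transformed graph on $9$ vertices needs a hull set of size about $5$. The same circularity kills plain subdivision. Your side remark that a degree-two vertex ``can never be generated because the cycle would have to be a triangle'' is also incorrect: the witnessing cycle may be long, so a degree-two vertex is generated as soon as its two neighbours are connected within the hull -- but this does not rescue the construction, for the reason above.

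The paper takes a different and essentially unavoidable detour: it reduces from the \textsc{Hull Number in $P_3$ Convexity} problem on bipartite graphs (NP-complete by Araujo et al.), not from cycle convexity on general graphs. For each non-edge $uv$ of the bipartite input $G$ with $N_G(u)\cap N_G(v)\neq\emptyset$ it attaches a gadget $F^{uv}$ built from five copies of an auxiliary bipartite graph $H(w)$; each copy forces exactly $9$ vertices into every hull set and generates its hub $y_0$, which in turn supplies the connecting path so that the $P_3$-generation step $u,z,v$ in $G$ is simulated by the $6$-cycle $u,u',y_0(w_3^{uv}),v',v,z$ in $G'$. The forced contribution of $45$ vertices per non-edge makes the size accounting exact ($k'=k+45|L|$). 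If you want to salvage your approach, you would need a gadget that simultaneously (i) keeps the graph bipartite, (ii) provides, for each pair that should generate a vertex, a connecting path that enters the hull \emph{before} the vertex being generated, and (iii) contributes a fixed, forced number of vertices to every hull set -- which is precisely what the paper's $H(w)$/$F^{uv}$ construction is engineered to do.
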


\begin{proof}
The problem is known to belong to NP~\cite{araujo2024hull}. For the hardness part, we perform a reduction from \textsc{Hull number in $P_3$ convexity} for bipartite graphs, which is NP-complete~\cite{araujo2013hull}. We first describe some subgraphs we use. Let $H(w)$ be the graph shown in Figure~\ref{fig:Hw_Fuv}(a). The \textit{non-edge} gadget $F^{uv}$ arises from the disjoint union of four vertices $u,u',v,v'$ together with five copies of $H(w)$, denoted as $H(w_i^{uv})$, for every $1 \leq i \leq 5$. Add to $E(F^{uv})$ the edges to make:
\begin{itemize}
    \item a $C_4: u, u', y_0(w_2^{uv}), y_0(w_1^{uv})$;
    \item a $C_4: v, v', y_0(w_4^{uv}), y_0(w_5^{uv})$; and
    \item a $P_3: u', y_0(w_3^{uv}),v'$.
\end{itemize}

A sketch of the graph $F^{uv}$, is depicted in Figure~\ref{fig:Hw_Fuv}(b). %Now, we proceed to the construction.

\begin{figure}[htb]
    \centering

\tikzset{every picture/.style={line width=0.75pt}} %set default line width to 0.75pt        

\begin{tikzpicture}[x=0.75pt,y=0.75pt,yscale=-1,xscale=1]
%uncomment if require: \path (0,300); %set diagram left start at 0, and has height of 300

%Straight Lines [id:da844300628929272] 
\draw [fill={rgb, 255:red, 255; green, 255; blue, 255 }  ,fill opacity=1 ]   (61.3,129.28) -- (112.64,129.16) ;
%Straight Lines [id:da20469038580321341] 
\draw [fill={rgb, 255:red, 255; green, 255; blue, 255 }  ,fill opacity=1 ]   (61.44,179.28) -- (112.77,179.16) ;
%Straight Lines [id:da6560855264393581] 
\draw [fill={rgb, 255:red, 255; green, 255; blue, 255 }  ,fill opacity=1 ]   (61.3,129.28) -- (61.44,179.28) ;
%Straight Lines [id:da6547916178597843] 
\draw [fill={rgb, 255:red, 255; green, 255; blue, 255 }  ,fill opacity=1 ]   (112.77,179.16) -- (112.64,129.16) ;
%Straight Lines [id:da8796560621785205] 
\draw [fill={rgb, 255:red, 255; green, 255; blue, 255 }  ,fill opacity=1 ]   (177.36,130.01) -- (228.69,129.89) ;
%Straight Lines [id:da14655701959723966] 
\draw [fill={rgb, 255:red, 255; green, 255; blue, 255 }  ,fill opacity=1 ]   (177.49,180.01) -- (228.82,179.88) ;
%Straight Lines [id:da22823858207326464] 
\draw [fill={rgb, 255:red, 255; green, 255; blue, 255 }  ,fill opacity=1 ]   (177.36,130.01) -- (177.49,180.01) ;
%Straight Lines [id:da4142824960425753] 
\draw [fill={rgb, 255:red, 255; green, 255; blue, 255 }  ,fill opacity=1 ]   (228.82,179.88) -- (228.69,129.89) ;
%Straight Lines [id:da5271789834780987] 
\draw [fill={rgb, 255:red, 255; green, 255; blue, 255 }  ,fill opacity=1 ]   (114.59,244.34) -- (147.39,210.62) ;
%Straight Lines [id:da3722587468645775] 
\draw [fill={rgb, 255:red, 255; green, 255; blue, 255 }  ,fill opacity=1 ]   (80.2,211.02) -- (112.77,179.16) ;
%Straight Lines [id:da10910210122366593] 
\draw [fill={rgb, 255:red, 255; green, 255; blue, 255 }  ,fill opacity=1 ]   (209.64,212.31) -- (177.49,180.01) ;
%Straight Lines [id:da7339080568690639] 
\draw [fill={rgb, 255:red, 255; green, 255; blue, 255 }  ,fill opacity=1 ]   (178.43,242.95) -- (147.39,210.62) ;
%Straight Lines [id:da8489461510246246] 
\draw [fill={rgb, 255:red, 255; green, 255; blue, 255 }  ,fill opacity=1 ]   (114.59,244.34) -- (80.2,210.13) ;
%Straight Lines [id:da22338430378734708] 
\draw [fill={rgb, 255:red, 255; green, 255; blue, 255 }  ,fill opacity=1 ]   (178.43,242.95) -- (208.7,212.37) ;
%Straight Lines [id:da2771176875292072] 
\draw [fill={rgb, 255:red, 255; green, 255; blue, 255 }  ,fill opacity=1 ]   (147.39,210.62) -- (112.77,178.26) ;
%Straight Lines [id:da9018712757382388] 
\draw [fill={rgb, 255:red, 255; green, 255; blue, 255 }  ,fill opacity=1 ]   (177.36,130.01) -- (146.13,98.39) ;
%Straight Lines [id:da8114172740197074] 
\draw [fill={rgb, 255:red, 255; green, 255; blue, 255 }  ,fill opacity=1 ]   (147.39,210.62) -- (177.49,179.12) ;
%Straight Lines [id:da7159381649101866] 
\draw [fill={rgb, 255:red, 255; green, 255; blue, 255 }  ,fill opacity=1 ]   (112.64,129.16) -- (144.25,96.49) ;
%Straight Lines [id:da8853310117326627] 
\draw [fill={rgb, 255:red, 255; green, 255; blue, 255 }  ,fill opacity=1 ]   (473.34,159.53) -- (540.33,159.32) ;
%Straight Lines [id:da497441234045378] 
\draw [fill={rgb, 255:red, 255; green, 255; blue, 255 }  ,fill opacity=1 ]   (413.58,158.08) -- (445.1,194.74) ;
%Straight Lines [id:da7140146695103777] 
\draw [fill={rgb, 255:red, 255; green, 255; blue, 255 }  ,fill opacity=1 ]   (476.76,159.91) -- (445.1,194.74) ;
%Straight Lines [id:da55776110633621] 
\draw [fill={rgb, 255:red, 255; green, 255; blue, 255 }  ,fill opacity=1 ]   (476.94,86.81) -- (540,86.65) ;
%Straight Lines [id:da2742425791004308] 
\draw [fill={rgb, 255:red, 255; green, 255; blue, 255 }  ,fill opacity=1 ]   (347.11,86.72) -- (413.11,86.64) ;
%Straight Lines [id:da7556134506924794] 
\draw [fill={rgb, 255:red, 255; green, 255; blue, 255 }  ,fill opacity=1 ]   (472.91,219.74) -- (445.1,194.74) ;
%Straight Lines [id:da30864259989232945] 
\draw [fill={rgb, 255:red, 255; green, 255; blue, 255 }  ,fill opacity=1 ]   (420.07,218.72) -- (445.1,194.74) ;
%Straight Lines [id:da3194029180612368] 
\draw [fill={rgb, 255:red, 255; green, 255; blue, 255 }  ,fill opacity=1 ]   (567.07,219.19) -- (540.62,192.84) ;
%Straight Lines [id:da451147130561818] 
\draw [fill={rgb, 255:red, 255; green, 255; blue, 255 }  ,fill opacity=1 ]   (574.61,159.15) -- (601.06,185.5) ;
%Straight Lines [id:da8059977204244431] 
\draw [fill={rgb, 255:red, 255; green, 255; blue, 255 }  ,fill opacity=1 ]   (602.43,218.03) -- (567.07,219.19) ;
%Straight Lines [id:da07924659987181526] 
\draw [fill={rgb, 255:red, 255; green, 255; blue, 255 }  ,fill opacity=1 ]   (574.61,159.15) -- (540.09,158.89) ;
%Straight Lines [id:da5315597439339195] 
\draw [fill={rgb, 255:red, 255; green, 255; blue, 255 }  ,fill opacity=1 ]   (601.77,218.03) -- (601.06,185.5) ;
%Straight Lines [id:da00013227590017628188] 
\draw [fill={rgb, 255:red, 255; green, 255; blue, 255 }  ,fill opacity=1 ]   (540.62,192.84) -- (540.09,158.89) ;
%Straight Lines [id:da3616680896573443] 
\draw [fill={rgb, 255:red, 255; green, 255; blue, 255 }  ,fill opacity=1 ]   (539.85,52.74) -- (566.58,26.68) ;
%Straight Lines [id:da004832549781827122] 
\draw [fill={rgb, 255:red, 255; green, 255; blue, 255 }  ,fill opacity=1 ]   (599.77,61.16) -- (573.03,87.22) ;
%Straight Lines [id:da24649290865810514] 
\draw [fill={rgb, 255:red, 255; green, 255; blue, 255 }  ,fill opacity=1 ]   (540.49,88.12) -- (539.85,52.74) ;
%Straight Lines [id:da9343470116580104] 
\draw [fill={rgb, 255:red, 255; green, 255; blue, 255 }  ,fill opacity=1 ]   (599.77,61.16) -- (600.53,26.65) ;
%Straight Lines [id:da4309892349380262] 
\draw [fill={rgb, 255:red, 255; green, 255; blue, 255 }  ,fill opacity=1 ]   (540.49,87.45) -- (573.03,87.22) ;
%Straight Lines [id:da8527361579048769] 
\draw [fill={rgb, 255:red, 255; green, 255; blue, 255 }  ,fill opacity=1 ]   (566.58,26.68) -- (600.53,26.65) ;
%Straight Lines [id:da5209000339969774] 
\draw [fill={rgb, 255:red, 255; green, 255; blue, 255 }  ,fill opacity=1 ]   (286.27,185.23) -- (312.64,158.79) ;
%Straight Lines [id:da9100901744947838] 
\draw [fill={rgb, 255:red, 255; green, 255; blue, 255 }  ,fill opacity=1 ]   (346.31,192.8) -- (319.95,219.23) ;
%Straight Lines [id:da1555588592029653] 
\draw [fill={rgb, 255:red, 255; green, 255; blue, 255 }  ,fill opacity=1 ]   (287.42,220.59) -- (286.27,185.23) ;
%Straight Lines [id:da7925596264181565] 
\draw [fill={rgb, 255:red, 255; green, 255; blue, 255 }  ,fill opacity=1 ]   (346.31,192.8) -- (346.59,158.28) ;
%Straight Lines [id:da7424362093001844] 
\draw [fill={rgb, 255:red, 255; green, 255; blue, 255 }  ,fill opacity=1 ]   (287.41,219.92) -- (319.95,219.23) ;
%Straight Lines [id:da6336140440576039] 
\draw [fill={rgb, 255:red, 255; green, 255; blue, 255 }  ,fill opacity=1 ]   (312.64,158.79) -- (346.59,158.28) ;
%Straight Lines [id:da22081603325866772] 
\draw [fill={rgb, 255:red, 255; green, 255; blue, 255 }  ,fill opacity=1 ]   (346.59,158.28) -- (413.58,158.08) ;
%Straight Lines [id:da8412675028355505] 
\draw [fill={rgb, 255:red, 255; green, 255; blue, 255 }  ,fill opacity=1 ]   (346.54,85.5) -- (346.36,158.6) ;
%Straight Lines [id:da4037087599338782] 
\draw [fill={rgb, 255:red, 255; green, 255; blue, 255 }  ,fill opacity=1 ]   (540,86.65) -- (540.33,159.32) ;
%Straight Lines [id:da09301574891198672] 
\draw [fill={rgb, 255:red, 255; green, 255; blue, 255 }  ,fill opacity=1 ]   (413.11,86.64) -- (413.58,158.08) ;
%Straight Lines [id:da03134518515537987] 
\draw [fill={rgb, 255:red, 255; green, 255; blue, 255 }  ,fill opacity=1 ]   (476.94,86.81) -- (476.76,159.91) ;
%Straight Lines [id:da9100810248396818] 
\draw [fill={rgb, 255:red, 255; green, 255; blue, 255 }  ,fill opacity=1 ]   (320.38,26.31) -- (346.72,52.77) ;
%Straight Lines [id:da5117184380117663] 
\draw [fill={rgb, 255:red, 255; green, 255; blue, 255 }  ,fill opacity=1 ]   (312.59,86.32) -- (286.25,59.86) ;
%Straight Lines [id:da7536549171988536] 
\draw [fill={rgb, 255:red, 255; green, 255; blue, 255 }  ,fill opacity=1 ]   (285.5,26.77) -- (320.38,26.31) ;
%Straight Lines [id:da22748905055940116] 
\draw [fill={rgb, 255:red, 255; green, 255; blue, 255 }  ,fill opacity=1 ]   (312.59,86.32) -- (347.11,86.72) ;
%Straight Lines [id:da7611747049069968] 
\draw [fill={rgb, 255:red, 255; green, 255; blue, 255 }  ,fill opacity=1 ]   (285.5,26.77) -- (286.25,59.86) ;
%Straight Lines [id:da49217373371270257] 
\draw [fill={rgb, 255:red, 255; green, 255; blue, 255 }  ,fill opacity=1 ]   (346.72,52.77) -- (347.11,86.72) ;
%Shape: Ellipse [id:dp45225713257242295] 
\draw  [color={rgb, 255:red, 0; green, 0; blue, 0 }  ,draw opacity=1 ][fill={rgb, 255:red, 255; green, 255; blue, 255 }  ,fill opacity=1 ] (351.36,158.83) .. controls (351.49,156.08) and (349.35,153.74) .. (346.6,153.61) .. controls (343.84,153.48) and (341.5,155.61) .. (341.37,158.37) .. controls (341.24,161.13) and (343.37,163.47) .. (346.13,163.6) .. controls (348.89,163.72) and (351.23,161.59) .. (351.36,158.83) -- cycle ;
%Shape: Ellipse [id:dp0514413132682674] 
\draw  [color={rgb, 255:red, 0; green, 0; blue, 0 }  ,draw opacity=1 ][fill={rgb, 255:red, 255; green, 255; blue, 255 }  ,fill opacity=1 ] (448.46,191.49) .. controls (446.59,189.47) and (443.42,189.34) .. (441.4,191.22) .. controls (439.37,193.09) and (439.24,196.25) .. (441.12,198.28) .. controls (442.99,200.31) and (446.16,200.43) .. (448.18,198.56) .. controls (450.21,196.68) and (450.34,193.52) .. (448.46,191.49) -- cycle ;
%Shape: Ellipse [id:dp021426377614830683] 
\draw  [color={rgb, 255:red, 0; green, 0; blue, 0 }  ,draw opacity=1 ][fill={rgb, 255:red, 255; green, 255; blue, 255 }  ,fill opacity=1 ] (540.47,154.33) .. controls (537.71,154.25) and (535.41,156.42) .. (535.33,159.18) .. controls (535.25,161.94) and (537.42,164.24) .. (540.18,164.32) .. controls (542.94,164.4) and (545.24,162.23) .. (545.32,159.47) .. controls (545.4,156.71) and (543.23,154.41) .. (540.47,154.33) -- cycle ;
%Shape: Ellipse [id:dp9490752195158845] 
\draw  [color={rgb, 255:red, 0; green, 0; blue, 0 }  ,draw opacity=1 ][fill={rgb, 255:red, 255; green, 255; blue, 255 }  ,fill opacity=1 ] (346.38,90.49) .. controls (349.14,90.59) and (351.45,88.42) .. (351.54,85.66) .. controls (351.63,82.9) and (349.46,80.59) .. (346.7,80.5) .. controls (343.94,80.41) and (341.63,82.57) .. (341.54,85.33) .. controls (341.45,88.09) and (343.62,90.4) .. (346.38,90.49) -- cycle ;
%Shape: Ellipse [id:dp10441981870393935] 
\draw  [color={rgb, 255:red, 0; green, 0; blue, 0 }  ,draw opacity=1 ][fill={rgb, 255:red, 255; green, 255; blue, 255 }  ,fill opacity=1 ] (535,86.45) .. controls (534.89,89.21) and (537.04,91.54) .. (539.8,91.65) .. controls (542.56,91.76) and (544.88,89.62) .. (545,86.86) .. controls (545.11,84.1) and (542.96,81.77) .. (540.2,81.66) .. controls (537.45,81.55) and (535.12,83.69) .. (535,86.45) -- cycle ;
%Shape: Ellipse [id:dp11756836917874036] 
\draw  [color={rgb, 255:red, 0; green, 0; blue, 0 }  ,draw opacity=1 ][fill={rgb, 255:red, 255; green, 255; blue, 255 }  ,fill opacity=1 ] (418.11,86.64) .. controls (418.11,83.88) and (415.87,81.64) .. (413.11,81.64) .. controls (410.34,81.64) and (408.11,83.88) .. (408.11,86.64) .. controls (408.11,89.4) and (410.34,91.64) .. (413.11,91.64) .. controls (415.87,91.64) and (418.11,89.4) .. (418.11,86.64) -- cycle ;
%Shape: Ellipse [id:dp10015059868020404] 
\draw  [color={rgb, 255:red, 0; green, 0; blue, 0 }  ,draw opacity=1 ][fill={rgb, 255:red, 255; green, 255; blue, 255 }  ,fill opacity=1 ] (481.94,86.81) .. controls (481.94,84.05) and (479.7,81.81) .. (476.94,81.81) .. controls (474.18,81.81) and (471.94,84.05) .. (471.94,86.81) .. controls (471.94,89.57) and (474.18,91.81) .. (476.94,91.81) .. controls (479.7,91.81) and (481.94,89.57) .. (481.94,86.81) -- cycle ;
%Shape: Ellipse [id:dp9949490991702274] 
\draw  [color={rgb, 255:red, 0; green, 0; blue, 0 }  ,draw opacity=1 ][fill={rgb, 255:red, 255; green, 255; blue, 255 }  ,fill opacity=1 ] (418.58,158.08) .. controls (418.58,155.32) and (416.34,153.08) .. (413.58,153.08) .. controls (410.82,153.08) and (408.58,155.32) .. (408.58,158.08) .. controls (408.58,160.84) and (410.82,163.08) .. (413.58,163.08) .. controls (416.34,163.08) and (418.58,160.84) .. (418.58,158.08) -- cycle ;
%Shape: Ellipse [id:dp24063561603131078] 
\draw  [color={rgb, 255:red, 0; green, 0; blue, 0 }  ,draw opacity=1 ][fill={rgb, 255:red, 255; green, 255; blue, 255 }  ,fill opacity=1 ] (481.76,159.91) .. controls (481.76,157.15) and (479.52,154.91) .. (476.76,154.91) .. controls (474,154.91) and (471.76,157.15) .. (471.76,159.91) .. controls (471.76,162.67) and (474,164.91) .. (476.76,164.91) .. controls (479.52,164.91) and (481.76,162.67) .. (481.76,159.91) -- cycle ;
%Straight Lines [id:da42827042124907067] 
\draw [fill={rgb, 255:red, 255; green, 255; blue, 255 }  ,fill opacity=1 ]   (419.5,256.05) -- (420.07,218.72) ;
%Straight Lines [id:da431790738687742] 
\draw [fill={rgb, 255:red, 255; green, 255; blue, 255 }  ,fill opacity=1 ]   (472.91,219.74) -- (472.34,257.07) ;
%Straight Lines [id:da9357340245746104] 
\draw [fill={rgb, 255:red, 255; green, 255; blue, 255 }  ,fill opacity=1 ]   (445.91,280.67) -- (419.5,256.05) ;
%Straight Lines [id:da3910405510667858] 
\draw [fill={rgb, 255:red, 255; green, 255; blue, 255 }  ,fill opacity=1 ]   (445.91,280.67) -- (472.34,257.07) ;
%Shape: Ellipse [id:dp5736733318402158] 
\draw  [color={rgb, 255:red, 0; green, 0; blue, 0 }  ,draw opacity=1 ][fill={rgb, 255:red, 255; green, 255; blue, 255 }  ,fill opacity=1 ] (118.48,244.12) .. controls (118.58,241.97) and (116.92,240.14) .. (114.77,240.04) .. controls (112.62,239.94) and (110.79,241.61) .. (110.69,243.76) .. controls (110.59,245.91) and (112.25,247.73) .. (114.41,247.83) .. controls (116.56,247.93) and (118.38,246.27) .. (118.48,244.12) -- cycle ;
%Shape: Ellipse [id:dp8357832585860323] 
\draw  [color={rgb, 255:red, 0; green, 0; blue, 0 }  ,draw opacity=1 ][fill={rgb, 255:red, 0; green, 0; blue, 0 }  ,fill opacity=1 ] (182.32,243.13) .. controls (182.42,240.98) and (180.76,239.16) .. (178.61,239.06) .. controls (176.46,238.96) and (174.63,240.62) .. (174.53,242.77) .. controls (174.43,244.92) and (176.1,246.75) .. (178.25,246.85) .. controls (180.4,246.95) and (182.22,245.29) .. (182.32,243.13) -- cycle ;
%Shape: Ellipse [id:dp09960261209277643] 
\draw  [color={rgb, 255:red, 0; green, 0; blue, 0 }  ,draw opacity=1 ][fill={rgb, 255:red, 0; green, 0; blue, 0 }  ,fill opacity=1 ] (151.28,210.8) .. controls (151.38,208.65) and (149.72,206.82) .. (147.57,206.72) .. controls (145.41,206.62) and (143.59,208.29) .. (143.49,210.44) .. controls (143.39,212.59) and (145.05,214.42) .. (147.2,214.52) .. controls (149.36,214.62) and (151.18,212.95) .. (151.28,210.8) -- cycle ;
%Shape: Ellipse [id:dp012322215093048428] 
\draw  [color={rgb, 255:red, 0; green, 0; blue, 0 }  ,draw opacity=1 ][fill={rgb, 255:red, 255; green, 255; blue, 255 }  ,fill opacity=1 ] (212.6,212.55) .. controls (212.7,210.4) and (211.03,208.57) .. (208.88,208.47) .. controls (206.73,208.37) and (204.9,210.04) .. (204.8,212.19) .. controls (204.7,214.34) and (206.37,216.17) .. (208.52,216.27) .. controls (210.67,216.37) and (212.5,214.7) .. (212.6,212.55) -- cycle ;
%Shape: Ellipse [id:dp6111057125773054] 
\draw  [color={rgb, 255:red, 0; green, 0; blue, 0 }  ,draw opacity=1 ][fill={rgb, 255:red, 0; green, 0; blue, 0 }  ,fill opacity=1 ] (232.72,180.07) .. controls (232.82,177.91) and (231.16,176.09) .. (229.01,175.99) .. controls (226.85,175.89) and (225.03,177.55) .. (224.93,179.7) .. controls (224.83,181.86) and (226.49,183.68) .. (228.64,183.78) .. controls (230.8,183.88) and (232.62,182.22) .. (232.72,180.07) -- cycle ;
%Shape: Ellipse [id:dp31915803419487854] 
\draw  [color={rgb, 255:red, 0; green, 0; blue, 0 }  ,draw opacity=1 ][fill={rgb, 255:red, 0; green, 0; blue, 0 }  ,fill opacity=1 ] (181.39,180.19) .. controls (181.49,178.04) and (179.82,176.21) .. (177.67,176.11) .. controls (175.52,176.01) and (173.69,177.68) .. (173.59,179.83) .. controls (173.49,181.98) and (175.16,183.8) .. (177.31,183.9) .. controls (179.46,184) and (181.29,182.34) .. (181.39,180.19) -- cycle ;
%Shape: Ellipse [id:dp2545949054922083] 
\draw  [color={rgb, 255:red, 0; green, 0; blue, 0 }  ,draw opacity=1 ][fill={rgb, 255:red, 0; green, 0; blue, 0 }  ,fill opacity=1 ] (84.09,210.31) .. controls (84.19,208.16) and (82.53,206.33) .. (80.38,206.23) .. controls (78.23,206.13) and (76.4,207.79) .. (76.3,209.95) .. controls (76.2,212.1) and (77.86,213.92) .. (80.01,214.02) .. controls (82.17,214.12) and (83.99,212.46) .. (84.09,210.31) -- cycle ;
%Shape: Ellipse [id:dp8031726025460946] 
\draw  [color={rgb, 255:red, 0; green, 0; blue, 0 }  ,draw opacity=1 ][fill={rgb, 255:red, 0; green, 0; blue, 0 }  ,fill opacity=1 ] (116.67,179.34) .. controls (116.77,177.18) and (115.1,175.36) .. (112.95,175.26) .. controls (110.8,175.16) and (108.97,176.82) .. (108.87,178.97) .. controls (108.77,181.13) and (110.44,182.95) .. (112.59,183.05) .. controls (114.74,183.15) and (116.57,181.49) .. (116.67,179.34) -- cycle ;
%Shape: Ellipse [id:dp6167679853639307] 
\draw  [color={rgb, 255:red, 0; green, 0; blue, 0 }  ,draw opacity=1 ][fill={rgb, 255:red, 255; green, 255; blue, 255 }  ,fill opacity=1 ] (65.33,179.06) .. controls (65.43,176.91) and (63.77,175.08) .. (61.62,174.98) .. controls (59.47,174.88) and (57.64,176.55) .. (57.54,178.7) .. controls (57.44,180.85) and (59.1,182.68) .. (61.26,182.78) .. controls (63.41,182.88) and (65.23,181.21) .. (65.33,179.06) -- cycle ;
%Shape: Ellipse [id:dp7100758948861818] 
\draw  [color={rgb, 255:red, 0; green, 0; blue, 0 }  ,draw opacity=1 ][fill={rgb, 255:red, 0; green, 0; blue, 0 }  ,fill opacity=1 ] (65.2,129.47) .. controls (65.3,127.31) and (63.64,125.49) .. (61.49,125.39) .. controls (59.33,125.29) and (57.51,126.95) .. (57.41,129.1) .. controls (57.31,131.25) and (58.97,133.08) .. (61.12,133.18) .. controls (63.28,133.28) and (65.1,131.62) .. (65.2,129.47) -- cycle ;
%Shape: Ellipse [id:dp7397109425214581] 
\draw  [color={rgb, 255:red, 0; green, 0; blue, 0 }  ,draw opacity=1 ][fill={rgb, 255:red, 0; green, 0; blue, 0 }  ,fill opacity=1 ] (116.53,129.34) .. controls (116.63,127.19) and (114.97,125.36) .. (112.82,125.26) .. controls (110.67,125.16) and (108.84,126.83) .. (108.74,128.98) .. controls (108.64,131.13) and (110.31,132.96) .. (112.46,133.06) .. controls (114.61,133.16) and (116.43,131.49) .. (116.53,129.34) -- cycle ;
%Shape: Ellipse [id:dp06602728056493068] 
\draw  [color={rgb, 255:red, 0; green, 0; blue, 0 }  ,draw opacity=1 ][fill={rgb, 255:red, 255; green, 255; blue, 255 }  ,fill opacity=1 ] (148.15,96.67) .. controls (148.25,94.52) and (146.58,92.69) .. (144.43,92.59) .. controls (142.28,92.49) and (140.45,94.15) .. (140.35,96.3) .. controls (140.25,98.46) and (141.92,100.28) .. (144.07,100.38) .. controls (146.22,100.48) and (148.05,98.82) .. (148.15,96.67) -- cycle ;
%Shape: Ellipse [id:dp30676293480816774] 
\draw  [color={rgb, 255:red, 0; green, 0; blue, 0 }  ,draw opacity=1 ][fill={rgb, 255:red, 0; green, 0; blue, 0 }  ,fill opacity=1 ] (181.25,130.19) .. controls (181.35,128.04) and (179.69,126.22) .. (177.54,126.12) .. controls (175.39,126.02) and (173.56,127.68) .. (173.46,129.83) .. controls (173.36,131.98) and (175.03,133.81) .. (177.18,133.91) .. controls (179.33,134.01) and (181.15,132.35) .. (181.25,130.19) -- cycle ;
%Shape: Ellipse [id:dp06746248809902466] 
\draw  [color={rgb, 255:red, 0; green, 0; blue, 0 }  ,draw opacity=1 ][fill={rgb, 255:red, 255; green, 255; blue, 255 }  ,fill opacity=1 ] (232.59,130.07) .. controls (232.69,127.92) and (231.03,126.09) .. (228.87,125.99) .. controls (226.72,125.89) and (224.9,127.56) .. (224.8,129.71) .. controls (224.7,131.86) and (226.36,133.69) .. (228.51,133.79) .. controls (230.66,133.89) and (232.49,132.22) .. (232.59,130.07) -- cycle ;

% Text Node
\draw (59.55,106.83) node [anchor=north west][inner sep=0.75pt]    {$x_{1}( w)$};
% Text Node
\draw (131.38,71.43) node [anchor=north west][inner sep=0.75pt]    {$y_{0}( w)$};
% Text Node
\draw (127.51,30.51) node [anchor=north west][inner sep=0.75pt]    {$H( w)$};
% Text Node
\draw (63.37,157.68) node [anchor=north west][inner sep=0.75pt]    {$x_{2}( w)$};
% Text Node
\draw (86.11,200.43) node [anchor=north west][inner sep=0.75pt]    {$x_{3}( w)$};
% Text Node
\draw (71.16,243.45) node [anchor=north west][inner sep=0.75pt]    {$x_{4}( w)$};
% Text Node
\draw (180.43,241.85) node [anchor=north west][inner sep=0.75pt]    {$x_{5}( w)$};
% Text Node
\draw (214.34,205.39) node [anchor=north west][inner sep=0.75pt]    {$x_{6}( w)$};
% Text Node
\draw (229.98,158.98) node [anchor=north west][inner sep=0.75pt]    {$x_{7}( w)$};
% Text Node
\draw (229.94,110) node [anchor=north west][inner sep=0.75pt]    {$x_{8}( w)$};
% Text Node
\draw (114.47,122.25) node [anchor=north west][inner sep=0.75pt]    {$y_{1}( w)$};
% Text Node
\draw (114.7,157.56) node [anchor=north west][inner sep=0.75pt]    {$y_{2}( w)$};
% Text Node
\draw (152.3,201.04) node [anchor=north west][inner sep=0.75pt]    {$y_{3}( w)$};
% Text Node
\draw (178.07,108.47) node [anchor=north west][inner sep=0.75pt]    {$y_{5}( w)$};
% Text Node
\draw (179.88,157.58) node [anchor=north west][inner sep=0.75pt]    {$y_{4}( w)$};
% Text Node
\draw (347.7,89.23) node [anchor=north west][inner sep=0.75pt]    {$y_{0}\left( w_{1}^{uv}\right)$};
% Text Node
\draw (419,86.41) node [anchor=north west][inner sep=0.75pt]    {$u$};
% Text Node
\draw (483.67,89.07) node [anchor=north west][inner sep=0.75pt]    {$v$};
% Text Node
\draw (347.7,137.23) node [anchor=north west][inner sep=0.75pt]    {$y_{0}\left( w_{2}^{uv}\right)$};
% Text Node
\draw (453.04,181.9) node [anchor=north west][inner sep=0.75pt]    {$y_{0}\left( w_{3}^{uv}\right)$};
% Text Node
\draw (543.04,139.23) node [anchor=north west][inner sep=0.75pt]    {$y_{0}\left( w_{4}^{uv}\right)$};
% Text Node
\draw (542.85,87.47) node [anchor=north west][inner sep=0.75pt]    {$y_{0}\left( w_{5}^{uv}\right)$};
% Text Node
\draw (419.67,139.74) node [anchor=north west][inner sep=0.75pt]    {$u'$};
% Text Node
\draw (481,139.07) node [anchor=north west][inner sep=0.75pt]    {$v'$};
% Text Node
\draw (288.63,44.66) node [anchor=north west][inner sep=0.75pt]    {$H\left( w_{1}^{uv}\right)$};
% Text Node
\draw (543.3,45.66) node [anchor=north west][inner sep=0.75pt]    {$H\left( w_{5}^{uv}\right)$};
% Text Node
\draw (291.3,176) node [anchor=north west][inner sep=0.75pt]    {$H\left( w_{2}^{uv}\right)$};
% Text Node
\draw (419.3,224) node [anchor=north west][inner sep=0.75pt]    {$H\left( w_{3}^{uv}\right)$};
% Text Node
\draw (542.36,179.26) node [anchor=north west][inner sep=0.75pt]    {$H\left( w_{4}^{uv}\right)$};
% Text Node
\draw (431.67,29.51) node [anchor=north west][inner sep=0.75pt]    {$F^{uv}$};
% Text Node
\draw (98,29.61) node [anchor=north west][inner sep=0.75pt]   [align=left] {(a)};
% Text Node
\draw (403,29.61) node [anchor=north west][inner sep=0.75pt]   [align=left] {(b)};

\end{tikzpicture}
    \caption{Graphs (a) $H(w)$ and (b) $F^{uv}$. The black vertices in (a) represent a $cc$-hull set of the graph $H(w)$. The hexagons in (b) represent the subgraphs $H(w)$ in $F^{uv}$.}
    \label{fig:Hw_Fuv}
\end{figure}
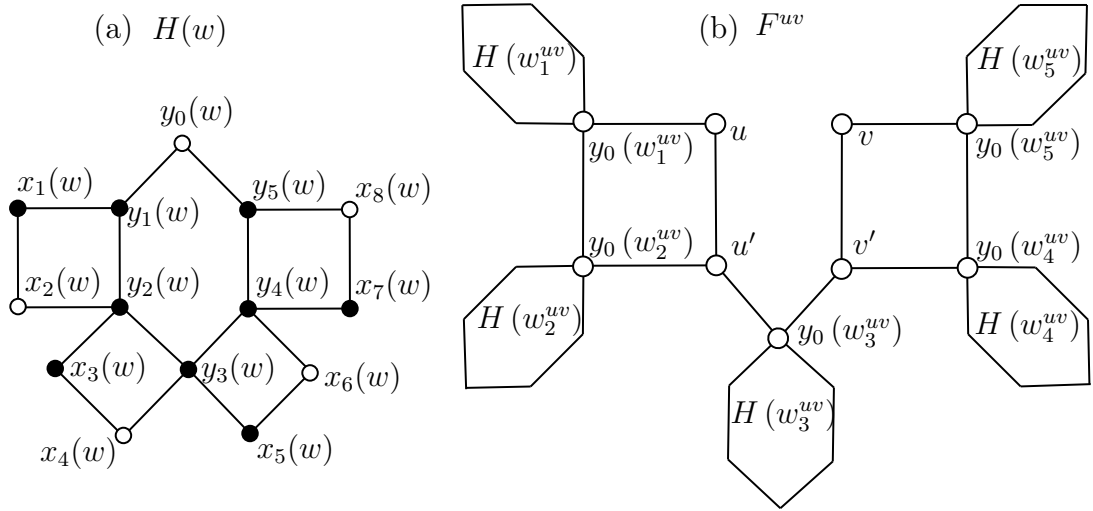

Let $(G,k)$ be an instance of \textsc{Hull number in $P_3$ convexity}, where $G$ is a bipartite graph and $k$ is a positive integer. We construct an instance $(G', k')$ of \textsc{Hull number in Cycle Convexity} as follows. Let $L = \{uv :$ $uv \notin E(G)$ and $N_G(u) \cap N_G(v) \neq \emptyset \}$ be the set of non-edges in $G$. 
The graph $G'$ arises from $G$ by adding a subgraph $F^{uv}$, for every $uv \in L$. We make $k' = k + 45|L|$. It is easy to see that $G'$ is a bipartite graph, since $G$ and $F^{uv}$ are clearly bipartite and, by definition of $L$, for every $uv \in L$, the connection between vertices in $V(F^{uv})$ and $V(G)$ make a $C_6: u,u', y_0(w_3^{uv}), v',v, z$, for every $z \in N_G(u) \cap N_G(v)$, which is bipartite. Further, the construction can be accomplished in polynomial time, since $|L| = O(|V(G)|)^2$. We show that $hn_{P_3}(G) \leq k$ if and only if $hn_{cc}(G') \leq k'$.

\smallskip
First, suppose that $hn_{P_3}(G) \leq k$. Let $S$ be a $P_3$-hull set of $G$ with $|S| \leq k$. For every $1 \leq i \leq 5$, we define $S(w_i^{uv}) = \{y_j(w_i^{uv}) : 1 \leq j \leq 5\} \cup \{ x_j(w_i^{uv}) : j \in \{1,3,5,7\} \}$, and, for every $uv \in L$, we define $\mathcal{S}^{uv} = S(w_1^{uv}) \cup \dots \cup S(w_5^{uv})$. We show that 
    $S' = S \cup \big(\bigcup_{uv \in L} \mathcal{S}^{uv}\big)$
is a $cc$-hull set of $G'$. Notice that $|S(w_i^{uv})| = 9$ and $\mathcal{S}^{uv} = 5|S(w_i^{uv})|$, then $|S'| = k + 45|L| = k'$. Let us proceed with some useful claims.

\medskip 
\noindent $\textbf{Claim 1: }$ Let $1 \leq i \leq 5$. Then $\langle S(w_i^{uv}) \rangle_C = V(H(w_i^{uv}))$.

\smallskip
\noindent \textit{Proof of Claim~1.} For short, we omit $(w_i^{uv})$ from the notation. By definition, $S = \{y_1, \dots, y_5, x_1, x_3,$ $x_5, x_7 \}$. The construction of $H$ implies that $y_0$ lies in a cycle $y_0,y_1, \dots y_5$; $x_2$ lies in a cycle $x_1, y_1, y_2, x_2$; $x_4$ lies in a cycle $x_3, y_2, y_3, x_3$; $x_6$ lies in a cycle $x_5,y_3,y_4,x_6$; and $x_8$ lies in a cycle $x_7,y_4,y_5,x_8$. Then, it holds that $y_0, x_2, x_4, x_6, x_8 \in \langle S \rangle_C$. \hfill $\blacksquare$

% \medskip 
% \noindent $\textbf{Claim 2: }$ Let $F^{uv}$ be a subgraph of $G'$, for $uv \in L$. It holds that \begin{enumerate}[label=(\alph*)]
%     \item $u \in \langle \mathcal{S}^{uv} \rangle_C$ if and only if $u' \in \langle \mathcal{S}^{uv} \rangle_C$, and
%     \item $v \in \langle \mathcal{S}^{uv} \rangle_C$ if and only if $v' \in \langle \mathcal{S}^{uv} \rangle_C$.
% \end{enumerate}

% \smallskip 
% Due to the symmetry of the gadget $F^{uv}$, we prove only Item (a). 
% By Claim~1 we have that $w_1^{uv}, \dots, w_5^{uv} \in \langle \mathcal{S}^{uv} \rangle_C$. Now, suppose that $u \in \langle \mathcal{S}^{uv} \rangle_C$. Since $u'$ lies in a cycle $u,u',w_2^{uv},w_1^{uv}$, we get $u' \in \langle \mathcal{S}^{uv} \cup \{u\} \rangle_C$

\medskip 
\noindent $\textbf{Claim 2: }$ Let $z \in V(G)$. If $z$ belongs to the $P_3$-convex hull of $S$ in $G$, then $z$ belongs to the $cc$-convex hull of $S'$ in $G'$.
%$z \in \langle S \rangle^{P_3(G)}_C$, then $z \in \langle S' \rangle^{cc(G')}_C$.

\smallskip
\noindent \textit{Proof of Claim~2.} Here, for the $P_3$-convexity in $G$ and the cycle convexity in $G'$, we adopt the superscripts $P_3(G)$ and $cc(G')$, respectively, to clarify the convexity and the graph being considered in the interval $\langle \cdot \rangle$ and convex hull $\langle \cdot \rangle_C$ operations.
Suppose that $z \in \langle S \rangle^{_{P_3(G)}}_C$. If $z \in S$, then $z \in S'$ and there is nothing to do. So, suppose that $z \notin S$ and $z \notin S'$. Since $z \in \langle S \rangle^{_{P_3(G)}}_C$, there exist $u,v \in \langle S \rangle^{_{P_3(G)}}_C$ such that $z \in \langle u, v  \rangle^{_{P_3(G)}}$, that is, $z$ lies in a $P_3: u,z,v$ in $G$. We prove by induction that $z \in \langle S' \rangle^{_{cc(G')}}_C$. For the base case, let $u,v \in S$, which means $u, v \in S'$. Notice that $G$ is bipartite, then given that $z$ lies in a $P_3: u,z,v$ in $G$, it follows that $uv \notin E(G)$. Thus $uv \in L$ and, by construction, a non-edge gadget $F^{uv}$ exists in $G'$. Recall that Claim~1 implies $y_0(w_i^{uv}) \in \langle S' \rangle_C^{_{cc(G')}}$, for every $1 \leq i \leq 5$. Thus, given that $u,v \in S'$, we have that $u'$ lies in the cycle $u, u', y_0(w_2^{uv}), y_0(w_1^{uv})$ as well as $v'$ lies in the cycle $v, v', y_0(w_4^{uv}), y_0(w_5^{uv})$. Consequently $u',v' \in \langle S' \rangle^{_{cc(G')}}_C$. Now, since $z$ lies in a cycle $u,u',y_0(w_3^{uv}),v',v,z$, we obtain that $z \in \langle S' \rangle^{_{cc(G')}}_C$.

By inductive hypothesis, suppose that $u, v \in \langle S' \rangle^{_{cc(G')}}_C$ with either $u \notin S'$, $v \notin S'$, or both $u,v \notin S'$. Since $G$ is bipartite, $u,z,v$ does not induce a $K_3$ in $G$, which implies $uv \notin E(G)$ and $uv \in L$. Consequently, the existence of a non-edge gadget $F^{uv}$ with $u',v' \in \langle S' \rangle^{_{cc(G')}}_C$ ensures that $z \in \langle S' \rangle^{_{cc(G')}}_C$, as $z$ lies in the cycle $u,u',y_0(w_3^{uv}),v',v,z$. \hfill $\blacksquare$

\medskip 
By Claims~1 and 2, we conclude that $S'$ is a $cc$-hull set of $G'$ and then $hn_{cc}(G') \leq k'$. 

\medskip
For the converse, suppose that $hn_{cc}(G') \leq k'$. Let $S'$ be a $cc$-hull set of $G'$ with $|S'| \leq k' = k + 45|L|$. The subsequent claims will be helpful.

\medskip 
\noindent $\textbf{Claim 3: }$ Let $1 \leq i \leq 5$. It holds that: 
\begin{enumerate}
    \item[(i)] $|S' \cap V(H(w_i^{uv}))| \geq 9$; and 
    \item[(ii)] if $S'$ is a minimum $cc$-hull set of $G'$, then $y_0(w_i^{uv}) \notin S'$. 
\end{enumerate}

\smallskip
\noindent \textit{Proof of Claim~3.} For short, we omit $(w_i^{uv})$ from the notation. First, we examine some properties of $H$. Let us fix $X = \{x_i : 1 \leq i \leq 8\}$, $Y = \{y_i : 0 \leq i \leq 5\}$, and define the folowing cycles:
\begin{multicols}{3}
\begin{itemize}
    \item $A_1: x_1, y_1, y_2, x_2$;
    \item $A_2: x_3, y_2, y_3, x_4$;
    \item $A_3: x_6, y_4, y_3, x_5$;
    \item $A_4: x_8, y_5, y_4, x_7$;
    \item $A_5: y_0,y_1,\dots, y_5$.    
\end{itemize}
\end{multicols}

Let $1 \leq i \leq 4$. For every $a \in A_i \setminus Y$, notice that $|N_H(a) \cap Y| = 1$. Then, since $S'$ is a $cc$-hull set, it follows that $S' \cap (A_i \setminus Y) \neq \emptyset$, which means that $|S' \cap X| \geq 4$. Furthermore, for every $a \in A_5 \setminus \{y_0\}$, we have that $|N_H(a) \cap (X \cup \{y_0\})| = 2$, say $N_H(a) \cap (X \cup \{y_0\}) = \{a_1, a_2\}$. Since $G' \setminus Y$ is disconnected and $a_1$ and $a_2$ belong to distinct connected components, we have that $|S' \cap (A_5 \setminus \{y_0\})| \geq 1$.

(i) Suppose by contradiction that $S' \cap V(H) = \{v_1, \dots, v_8\}$. 
From the above, we assume that $v_1, \dots, v_4 \in S' \cap X$, and $v_5 \in S' \cap (A_5 \setminus \{y_0\})$. By construction, $A_i$ induces a $C_4$, $|A_i \cap A_j| = 1$, and all the paths from vertices in $A_i$ to vertices in $A_j$ pass through $Y$, for $i, j \in [4]$, $i \neq j$. This implies that $v_6, v_7, v_8$ generate at most three cycles in $\{ A_1, \dots, A_4 \}$, a contradiction.

(ii) Let $S'$ be a minimum $cc$-hull set of $G'$. By (i), we have that $|S' \cap V(H)| = 9$, with $|S' \cap X| \geq 4$ and $|S' \cap (A_5 \setminus \{y_0\})| \geq 1$. Suppose by contradiction that $y_0 \in S'$. We denote $S' \cap V(H) = \{y_0, v_1, \dots, v_8\}$.  We assume that $v_1, \dots, v_4 \in S' \cap X$, and $v_5 \in S' \cap (A_5 \setminus \{y_0\})$. By construction, $y_0$ lies only in cycle $A_5$, then, similarly to Item (i) $v_6, v_7, v_8$ generate at most three cycles in $\{ A_1, \dots, A_4 \}$, a contradiction. \hfill $\blacksquare$
 
\medskip 
\noindent $\textbf{Claim 4: }$ Let $uv \in L$. Then $|S' \cap F^{uv}| \geq 45$.

\smallskip
\noindent \textit{Proof of Claim~4.}
Let $1 \leq i \leq 5$. By Claim~3(ii), we know that $y_0(w_i^{uv})$ does not belong to a minimum $cc$-hull set of $G'$. Given that $y_0(w_i^{uv})$ is a cut-vertex in $G$ and $H(w_i^{uv})  \setminus \{ y_0(w_i^{uv}) \} $ is a connected component of $G' \setminus \{ y_0(w_i^{uv}) \} $, it follows that $|S' \cap F^{uv}| \geq |S' \cap (V(H(w_1^{uv}) \cup \dots \cup V(H(w_1^{uv})))| = 5 \cdot 9 = 45$. \hfill $\blacksquare$

\medskip
By Claim~4, we may assume that $|S' \cap (V(G) \cup \{u',v'\})| \geq k$. Notice that if $u' \in S'$, a $cc$-hull set $S''$ of $G'$ of the same order of $S'$ can be obtained by $S'' = S' \setminus \{u'\} \cap \{u\}$. The same reasoning applies to $v'$. We then assume that $S''$ is a $cc$-hull set of $G$ with $|S''| \geq k'$ and $S'' \cap \{u', v'\} = \emptyset$, for every $uv \in L$. We show that  $S'' \cap V(G)$ is a $P_3$-hull set of $G$. Recall that $y_0(w_i^{uv}) \in \langle S'' \rangle_C$, for every $1 \leq i \leq 5$. %Since $S''$ is a $cc$-hull set of $G'$, we have that $u' \in \langle S'' \rangle_C$.

\medskip 
\noindent $\textbf{Claim 5: }$ Let $z \in V(G)$. If $z$ is in the $cc$-convex hull of $S''$ in $G'$, then $z$ is in the $P_3$-convex hull of $S = S'' \cap V(G)$ in $G$.
%$z \in \langle S \rangle^{P_3(G)}_C$, then $z \in \langle S' \rangle^{cc(G')}_C$.

\smallskip
\noindent \textit{Proof of Claim~5.} As done in the proof of Claim~2, for the $P_3$-convexity in $G$ and the cycle convexity in $G'$, we use the superscripts $P_3(G)$ and $cc(G')$, respectively, to identify the convexity and the graph being considered in the interval  and convex hull operations.

Suppose that $z \in \langle S'' \rangle^{_{cc(G')}}_C$. If $z \in S''$, then $z \in S$ and there is nothing to do. So, suppose that $z \notin S''$. Since $z \in \langle S'' \rangle^{_{cc(G')}}_C$, there exist $u,v \in \langle S'' \rangle^{_{cc(G')}}_C$ such that $z \in \langle u, v  \rangle^{_{cc(G')}}$. We prove by induction that $z \in \langle S \rangle^{_{P_3(G)}}_C$. For the base case, let $u, v \in S$. Since $uv \notin E(G')$ and $uv \in L$, $z \in \langle S'' \rangle_C^{_{cc(G')}}$ because of the cycle $z, u, u', y_0(w_3^{uv}), v', v$ created by the non-edge gadget $F^{uv}$. Thus, the path with three vertices $u,z,v$ implies that $z \in \langle S \rangle_C^{_{P_3(G)}}$.

By inductive hypothesis, suppose that $u, v \in \langle S \rangle^{_{P_3(G)}}_C$, with either $u \notin S$, $v \notin S$, or both $u,v \notin S$. Since $G'$ is bipartite, we have that $uv \notin E(G')$. Consequently $uv \in L$, and by the non-edge gadget $F^{uv}$, we have $u',v' \in \langle S'' \rangle^{_{cc(G')}}_C$. Since $z$ lies in a cycle $u,u',y_0(w_3^{uv}),v',v,z$, we get $z \in \langle S'' \rangle^{_{cc(G')}}_C$. This, when mapped to the three-vertex path $u,z,v$ in $G$ implies that $z \in \langle S \rangle_C^{_{P_3(G)}}$. \hfill $\blacksquare$

\medskip
By Claim~5, we obtain that $S$ is a $P_3$-hull set of $G$ and then $hn_{P_3}(G) \leq k$ completing the proof. 
\end{proof}

%Figure~\ref{fig:Hw_Fuv}(a) and (b) shows the graphs $H(w)$ and $F^{uv}$ used in the proof of Theorem~\ref{theo:bipartiteNP-complete}, respectively.

We are now ready to prove the NP-completeness of \textsc{Hull Number in Cycle Convexity} for Cartesian product of graphs. 

\begin{theorem}\label{theo:cartesianNPComplete}
\textsc{Hull number in Cycle Convexity} remains NP-complete on Cartesian product $G_1 \square G_2$ graphs under either of the following conditions:  
\begin{enumerate}
    \item[(i)] $G_1 \square G_2$ is bipartite;  
    \item[(ii)] both $G_1$ and $G_2$ are planar.  
\end{enumerate}  
\end{theorem}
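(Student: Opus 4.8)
\textbf{Proof proposal for Theorem~\ref{theo:cartesianNPComplete}.}

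The plan is to handle the two conditions separately, each by a polynomial reduction that embeds a known NP-complete problem into a Cartesian product. For item~(i), I would reduce from \textsc{Hull number in Cycle Convexity} on bipartite graphs, which is NP-complete by Theorem~\ref{theo:bipartiteNP-complete}. Given a bipartite instance $(G,k)$, take a path $P_2$ (or more generally a short path) as the second factor and set $G_1 \square G_2 := G \square P_2$. Since $G$ and $P_2$ are bipartite, $G \square P_2$ is bipartite, so the output is a legitimate instance of the restricted problem. The work is then to translate the bound: using Theorem~\ref{theo:bounds_with_hnccH_two}, since $hn_{cc}(P_2)=2$, we have $hn_{cc}(G)\le hn_{cc}(G\square P_2)\le hn_{cc}(G)+1$, and the equality case is characterized by the existence of a minimum hull set of $G$ that splits into two parts with intersecting convex hulls. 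To get a clean reduction I would preprocess $G$ (or choose the second factor more carefully) so that this equality-case behavior is forced one way; alternatively, reduce directly to $G \square H$ for a suitable small $H$ and argue that $hn_{cc}(G\square H)$ equals $hn_{cc}(G)+c$ for a fixed constant $c$ depending only on the gadget, then set $k' = k + c$.

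For item~(ii), I would reduce from \textsc{Hull number in Cycle Convexity} on planar graphs, shown NP-complete by Araujo et al.~\cite{araujo2024hull}, or from a planar variant of \textsc{Hull number in $P_3$ convexity}. The subtlety is that an arbitrary planar $G$ need not be expressible as a Cartesian product of two planar graphs, so I cannot simply take $G \square P_2$ and expect $G$ to be a factor — instead the whole product $G_1 \square G_2$ must be planar, which already forces both $G_1$ and $G_2$ to be rather restricted (a Cartesian product of two graphs on at least two vertices each is planar only in limited cases, essentially one factor being a path and the other outerplanar, up to small exceptions). So here I would build a \emph{new} planar graph $G'$ from the planar input, realized as $G_1 \square G_2$ with $G_1, G_2$ planar — for instance taking $G_2 = P_2$ and $G_1 = G$ when $G \square P_2$ happens to be planar (true when $G$ is outerplanar), and otherwise first reducing the planar hull-number problem to the outerplanar or grid-like case, then applying the grid-type gadgets. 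The generalization of the grid result (the closed formula for Cartesian products of trees mentioned in the introduction, and the known polynomial algorithm on grids) suggests that the hardness for planar products comes from attaching non-grid planar structure, so the gadgetry of Theorem~\ref{theo:bipartiteNP-complete}, which is already planar-friendly (the graphs $H(w)$ and $F^{uv}$ can be drawn planarly), can be reused.

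The key steps, in order: (1) fix the source problem for each item — bipartite \textsc{Hull Number in Cycle Convexity} for~(i), planar \textsc{Hull Number in Cycle Convexity} (or planar $P_3$-hull) for~(ii); (2) define the product construction and the target parameter $k'$, verifying bipartiteness resp. planarity of both factors and of the product; (3) prove the forward direction: a small hull set of the source lifts, via the layer-filling arguments of Lemma~\ref{line_column} and Theorem~\ref{theo:bounds_with_hnccH_two}, to a hull set of the product of the prescribed size; (4) prove the backward direction: a hull set of the product projects (Lemma~\ref{projections}) to a hull set of the source, and a counting/exchange argument (as in Claims~4 and~5 of Theorem~\ref{theo:bipartiteNP-complete}) shows the projected set is not too large. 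Membership in NP is inherited from \cite{araujo2024hull} in both cases.

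The main obstacle I expect is the backward direction's bookkeeping in the product: a minimum hull set of $G_1 \square G_2$ need not be a sub-product and, by Lemma~\ref{union of subproduct}, convex hulls of arbitrary seed sets decompose as unions of sub-products in a way that is hard to control; so showing that an optimal hull set of the product cannot ``cheat'' by spreading vertices across layers more cleverly than the prescribed lifted set requires the exchange arguments (moving each misplaced vertex back to a canonical layer without increasing size, as done with $u'\mapsto u$ in the proof of Theorem~\ref{theo:bipartiteNP-complete}) together with the characterization in Theorem~\ref{theo:bounds_with_hnccH_two}. Getting the constant offset between $k$ and $k'$ exactly right — i.e. ruling out that the product has a strictly smaller hull set than $hn_{cc}(G)$ plus the gadget cost — is where the care is needed.
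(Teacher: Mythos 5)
Your overall strategy for item~(i) --- take the product with $K_2$ (or $P_2$) and use Theorem~\ref{theo:bounds_with_hnccH_two} to relate $hn_{cc}(G\square K_2)$ to $hn_{cc}(G)$ --- is exactly the paper's, but you stop precisely where the actual work happens. Theorem~\ref{theo:bounds_with_hnccH_two} only pins the product's hull number to within an additive $1$, and the two cases are distinguished by whether some minimum hull set of the first factor splits into two parts with intersecting convex hulls. You acknowledge that this dichotomy must be ``forced one way'' by preprocessing or by a gadget, but you never exhibit the gadget, and that gadget is the entire content of the reduction. The paper resolves it by forming $G'$ from $G$ together with two copies of the graph $H(w)$ of Figure~\ref{fig:Hw_Fuv}(a), glued at their $y_0$-vertices into a single cut-vertex $v$ made adjacent to some $u\in V(G)$; each copy forces exactly $9$ hull-set vertices inside it (via Claim~3 in the proof of Theorem~\ref{theo:bipartiteNP-complete}), the convex hulls of the two copies' seed sets meet precisely in $\{v\}$, so the equality case of Theorem~\ref{theo:bounds_with_hnccH_two} applies and $hn_{cc}(G'\square K_2)=hn_{cc}(G')=hn_{cc}(G)+18$. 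Without a concrete gadget together with a proof that it (a) forces the equality case and (b) contributes a fixed, certifiable constant to the hull number, there is no reduction, only a plan for one.

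For item~(ii) you have misread the condition: the theorem requires only that both factors $G_1$ and $G_2$ be planar, not that the product $G_1\square G_2$ be planar. Your ensuing detour --- reducing the planar instance to outerplanar or grid-like instances so that the product itself is planar --- is both unnecessary and a dead end, since the paper's own cited results show the cycle hull number is computable in polynomial time on grids (and, by the tree-product formula, on Cartesian products of trees), so hardness cannot be carried through a reduction whose targets are grid-type products. With the condition read correctly, item~(ii) is handled by the very same construction as item~(i): the gadget $H(w)$ is planar, hence $G'$ is planar whenever $G$ is, $K_2$ is planar, and that is all condition~(ii) asks for; the only change is the source problem (planar \textsc{Hull Number in Cycle Convexity}, NP-complete by Araujo et al.) in place of the bipartite one.
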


\begin{proof}
The NP-membership is already established~\cite{araujo2024hull}. 
For the NP-hardness, the proof of \textit{(i)} and \textit{(ii)} make use of the graph $H(w)$ shown in Figure~\ref{fig:Hw_Fuv}(a) in order to produce a graph that has a minimum hull set $S$ that can be partitioned into two $S_1 \cup S_2$ with $\langle S_1 \rangle_C \cap \langle S_2 \rangle_C \neq \emptyset$.

Let $\mathcal{B}$ and $\mathcal{P}$ be the classes of bipartite and planar graphs, respectively. From an instance $(G,k)$ of \textsc{Hull number in Cycle Convexity} in which $G \in \{ \mathcal{B}, \mathcal{P} \}$, we construct an instance $(G' \square K_2, k')$ of the same problem as described below. Before, let us remark that if $G \in \mathcal{B}$, the problem is NP-complete by Theorem~\ref{theo:bipartiteNP-complete} and if $G \in \mathcal{P}$, the NP-completeness follows from Theorem~4.2 by Araujo et al.~\cite{araujo2024hull}.

Let $H$ be the graph formed by two copies of $H(w)$ from Figure~\ref{fig:Hw_Fuv}(a), labeled $H(w_1), H(w_2)$, where the vertices $y_0(w_1)$ and $y_0(w_2)$ are identified as a single vertex $v$.  The graph $G'$ arises from the disjoint union of $G$ and $H$ by adding $uv \in E(G')$, for some vertex $u \in V(G)$. A sketch of the graph $G'$ is depicted in Figure~\ref{fig:graph_H}. The graph $G'$ is used to compute the Cartesian product $G' \square K_2$. Given that $H$ and $K_2$ are planar bipartite graphs, we have that: if $G \in \mathcal{B}$, $G' \square K_2$ is bipartite, as Item \textit{(i)}; and if $G \in \mathcal{P}$, both $G'$ and $K_2$ are planar, as Item \textit{(ii)}.
We show some properties of $G'$ in Claims~1 and~2. 

\begin{figure}[htb]
    \centering

\tikzset{every picture/.style={line width=0.75pt}} %set default line width to 0.75pt        

\begin{tikzpicture}[x=0.75pt,y=0.75pt,yscale=-1,xscale=1]
%uncomment if require: \path (0,245); %set diagram left start at 0, and has height of 245

%Straight Lines [id:da5377021577457151] 
\draw [fill={rgb, 255:red, 255; green, 255; blue, 255 }  ,fill opacity=1 ]   (218.3,168.98) -- (243.05,140.94) ;
%Straight Lines [id:da4457628994986058] 
\draw [fill={rgb, 255:red, 255; green, 255; blue, 255 }  ,fill opacity=1 ]   (218.84,116.13) -- (243.05,140.94) ;
%Straight Lines [id:da3969770214087709] 
\draw [fill={rgb, 255:red, 255; green, 255; blue, 255 }  ,fill opacity=1 ]   (181.51,115.9) -- (218.84,116.13) ;
%Straight Lines [id:da15793099991410697] 
\draw [fill={rgb, 255:red, 255; green, 255; blue, 255 }  ,fill opacity=1 ]   (218.3,168.98) -- (180.97,168.75) ;
%Straight Lines [id:da29577529731917895] 
\draw [fill={rgb, 255:red, 255; green, 255; blue, 255 }  ,fill opacity=1 ]   (157.13,142.54) -- (181.51,115.9) ;
%Straight Lines [id:da27904011528671724] 
\draw [fill={rgb, 255:red, 255; green, 255; blue, 255 }  ,fill opacity=1 ]   (157.13,142.54) -- (180.97,168.75) ;
%Straight Lines [id:da5420777847888376] 
\draw [fill={rgb, 255:red, 255; green, 255; blue, 255 }  ,fill opacity=1 ]   (133.5,169.38) -- (158.25,141.34) ;
%Straight Lines [id:da10622012523448832] 
\draw [fill={rgb, 255:red, 255; green, 255; blue, 255 }  ,fill opacity=1 ]   (134.04,116.53) -- (158.25,141.34) ;
%Straight Lines [id:da7616372495217667] 
\draw [fill={rgb, 255:red, 255; green, 255; blue, 255 }  ,fill opacity=1 ]   (96.71,116.3) -- (134.04,116.53) ;
%Straight Lines [id:da5055063183593036] 
\draw [fill={rgb, 255:red, 255; green, 255; blue, 255 }  ,fill opacity=1 ]   (133.5,169.38) -- (96.17,169.15) ;
%Straight Lines [id:da9542592427897074] 
\draw [fill={rgb, 255:red, 255; green, 255; blue, 255 }  ,fill opacity=1 ]   (72.33,142.94) -- (96.71,116.3) ;
%Straight Lines [id:da10517888507402273] 
\draw [fill={rgb, 255:red, 255; green, 255; blue, 255 }  ,fill opacity=1 ]   (72.33,142.94) -- (96.17,169.15) ;
%Straight Lines [id:da8881746403206348] 
\draw    (157.23,142.13) -- (157.33,83) ;
%Shape: Circle [id:dp9464189301308952] 
\draw  [color={rgb, 255:red, 0; green, 0; blue, 0 }  ,draw opacity=1 ][fill={rgb, 255:red, 255; green, 255; blue, 255 }  ,fill opacity=1 ] (152.33,83) .. controls (152.33,80.24) and (154.57,78) .. (157.33,78) .. controls (160.09,78) and (162.33,80.24) .. (162.33,83) .. controls (162.33,85.76) and (160.09,88) .. (157.33,88) .. controls (154.57,88) and (152.33,85.76) .. (152.33,83) -- cycle ;
%Shape: Circle [id:dp7295930380528866] 
\draw  [color={rgb, 255:red, 0; green, 0; blue, 0 }  ,draw opacity=1 ][fill={rgb, 255:red, 255; green, 255; blue, 255 }  ,fill opacity=1 ] (152.23,142.13) .. controls (152.23,139.37) and (154.47,137.13) .. (157.23,137.13) .. controls (159.99,137.13) and (162.23,139.37) .. (162.23,142.13) .. controls (162.23,144.89) and (159.99,147.13) .. (157.23,147.13) .. controls (154.47,147.13) and (152.23,144.89) .. (152.23,142.13) -- cycle ;
%Shape: Ellipse [id:dp6613512119742431] 
\draw   (79.67,71.41) .. controls (79.67,57.09) and (114.26,45.48) .. (156.93,45.48) .. controls (199.61,45.48) and (234.2,57.09) .. (234.2,71.41) .. controls (234.2,85.73) and (199.61,97.33) .. (156.93,97.33) .. controls (114.26,97.33) and (79.67,85.73) .. (79.67,71.41) -- cycle ;
%Shape: Rectangle [id:dp1489036903704657] 
\draw   (62.8,108.08) -- (255.4,108.08) -- (255.4,179.08) -- (62.8,179.08) -- cycle ;

% Text Node
\draw (140,67.07) node [anchor=north west][inner sep=0.75pt]    {$u$};
% Text Node
\draw (151,148.73) node [anchor=north west][inner sep=0.75pt]    {$v$};
% Text Node
\draw (50.4,62.33) node [anchor=north west][inner sep=0.75pt]    {$G$};
% Text Node
\draw (93.19,132.93) node [anchor=north west][inner sep=0.75pt]    {$H( w_{1})$};
% Text Node
\draw (177.99,132.93) node [anchor=north west][inner sep=0.75pt]    {$H( w_{2})$};
% Text Node
\draw (30.4,132.73) node [anchor=north west][inner sep=0.75pt]    {$H$};

\end{tikzpicture}
    \caption{Graph $G'$. The hexagons represent the subgraphs $H(w)$ from Figure~\ref{fig:Hw_Fuv}(a).}
    \label{fig:graph_H}
\end{figure}
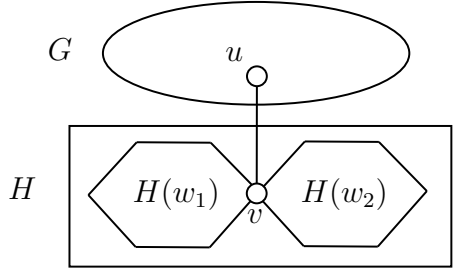

\smallskip 
\noindent $\textbf{Claim 1: }$ For every minimum hull set $S$ of $G'$, $|S \cap V(H)| = 18$.

\smallskip
\noindent \textit{Proof of Claim~1.} Let $S$ be a minimum hull set of $G'$.
Since $v$ is a cut-vertex in $G'$ and $v \notin S$, Claim~3 from the proof of Theorem~\ref{theo:bipartiteNP-complete} implies that $|S \cap V(H(w_i))| = 9$, for $i = 1,2$. Summing over $H(w_1)$ and $H(w_2)$ gives $|S \cap V(H)| = 18$. \hfill $\blacksquare$

\smallskip

For the next, let us fix $S(w_i) = \{y_j(w_i) : 1 \leq j \leq 5\} \cup \{ x_j(w_i) : j \in \{1,3,5,7\} \}$ for $i = 1, 2$.

\smallskip 
\noindent $\textbf{Claim 2: }$ There is a minimum hull set $S$ in $G'$ such that $S$ can be partitioned into $S_1 \cup S_2$ with $\langle S_1 \rangle_C \cap \langle S_2 \rangle_C \neq \varnothing$.

\smallskip
\noindent \textit{Proof of Claim~2.} Let $S$ be a minimum hull set of $G'$ such that $S(w_1) \cup S(w_2) \subseteq S$. By considering $S_1 = S(w_1)$ and $S_2 = S \setminus S(w_1)$, we have that 
$\langle S_1 \rangle_C  =  V(H(w_1))$, and $\langle S_2 \rangle_C  =  V(G') \cup V(H(w_2))$. Then $\langle S_1 \rangle_{C} \cap \langle S_2 \rangle_{C} = \{v\}$ as desired. \hfill $\blacksquare$

\medskip 
By Claim~2, Theorem~\ref{theo:bounds_with_hnccH_two} implies that $hn_{cc}(G' \square K_2) = hn_{cc}(G')$. So we just show that $hn_{cc}(G) \leq k$ if and only if $hn_{cc}(G') \leq k + 18$.
Let $S$ be a hull set of $G$ with $|S| \leq k$ and $S' = S \cup S(w_1) \cup S(w_2)$. Since $\langle S' \cap V(G)  \rangle_C = V(G)$ and $\langle S' \cap V(H)  \rangle_C = V(H)$, we obtain that $S'$ is a hull set of $G'$.
For the converse, let $S'$ be a hull set of $G'$ with $|S'| \leq k+18$. By Claim~1, we know that $|S' \cap V(H)| \geq 18$. Then, $|S' \cap V(G)| \leq k$. Since $v$ is a cut-vertex in $G'$ and $G$ is a connected component of $G' \setminus v$, any vertex in $V(G)$ must be generated by the convex hull of vertices in $V(G)$, consequently $S'\cap V(G)$ is a hull set of $G$.
\end{proof}

As a positive result, in the following, we prove that the cycle hull number of Cartesian product of any two trees can be computed in linear time. This finding generalizes the already known result for grid graphs, $hn_{cc}(P_m\Box P_n)=m+n-1$~\cite{araujo2024hull}.
\begin{theorem}
    Let $T_1$ and  $T_2$ be two nontrivial trees with orders $m$ and $n$ respectively. Then $hn_{cc}(T_1\Box T_2)=m+n-1.$
\end{theorem}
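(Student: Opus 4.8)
The plan is to establish $hn_{cc}(T_1\Box T_2)\le m+n-1$ and $hn_{cc}(T_1\Box T_2)\ge m+n-1$ by quite different means: the upper bound is a one‑line consequence of a theorem already proved, while the lower bound is where the work lies.

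For the upper bound, note first that a tree contains no cycle, so for any tree $T$, any $S\subseteq V(T)$ and any $w\in V(T)\setminus S$ the induced subgraph $T[S\cup\{w\}]$ is a forest; hence $\langle S\rangle=S$, every subset of $V(T)$ is cycle convex, and therefore $hn_{cc}(T)=|V(T)|$. Substituting $hn_{cc}(T_1)=m$ and $hn_{cc}(T_2)=n$ into the upper bound of Theorem~\ref{hull1} gives $hn_{cc}(T_1\Box T_2)\le m+n-1$ at once. The same substitution shows the lower bound of Theorem~\ref{hull1} only yields $\max\{m,n,3\}$, so a fresh argument is needed.

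For the lower bound, write $G=T_1\Box T_2$ and work in the cycle space of $G$ over $\mathbb{F}_2$. Let $S$ be any hull set of $G$. Since $\langle S\rangle_C=V(G)$ and any vertex generated from a set contained in $\langle S\rangle_C$ again lies in $\langle S\rangle_C$, one can build a generation chain $S=W_0\subsetneq W_1\subsetneq\cdots\subsetneq W_N=V(G)$ in which $W_k=W_{k-1}\cup\{v_k\}$ and $v_k\in\langle W_{k-1}\rangle\setminus W_{k-1}$; here $N=mn-|S|$ and $v_1,\dots,v_N$ are pairwise distinct. By the definition of the cycle interval, for each $k$ there is a cycle $\Gamma_k$ in $G[W_k]$ passing through $v_k$. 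The decisive point is that $\Gamma_1,\dots,\Gamma_N$ are linearly independent in the cycle space: if $\sum_{k\in I}\Gamma_k=0$ for some nonempty $I$, let $k^\star=\max I$; for $k\in I$ with $k<k^\star$ the cycle $\Gamma_k$ lives in $G[W_k]$, which does not contain $v_{k^\star}$, so $\Gamma_k$ uses no edge at $v_{k^\star}$, whereas $\Gamma_{k^\star}$ uses exactly two edges at $v_{k^\star}$, contradicting $\sum_{k\in I}\Gamma_k=0$. Hence $N$ is at most the dimension of the cycle space of $G$, namely $|E(G)|-|V(G)|+1=(2mn-m-n)-mn+1=(m-1)(n-1)$. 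Therefore $|S|=mn-N\ge mn-(m-1)(n-1)=m+n-1$, and together with the upper bound this gives $hn_{cc}(T_1\Box T_2)=m+n-1$.

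The main obstacle is getting the linear‑independence step exactly right: setting up the one‑vertex‑at‑a‑time generation chain and tracking the latest added vertex $v_{k^\star}$ so that only $\Gamma_{k^\star}$ contributes edges incident to it; the remaining ingredients (the equality $hn_{cc}(T)=|V(T)|$ and the cycle rank of $T_1\Box T_2$) are routine. A more hands‑on alternative for the lower bound would show first that every hull set must meet each layer ${}^{g}T_2$ and each layer $T_1^{h}$ — since removing a vertex of a tree separates its neighbours, a vertex whose column or row is empty can never be generated — and then bound $|S|$ by counting in a spanning forest of $G[S]$ the ``horizontal'' edges (at least one over each edge of $T_1$) and the ``vertical'' ones; the awkward part of that route is handling disconnected hull sets, which the cycle‑space argument avoids entirely.
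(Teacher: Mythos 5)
Your proof is correct, but the route is genuinely different from the paper's. For the upper bound both arguments coincide in substance: you observe that every subset of a tree is cycle convex, hence $hn_{cc}(T_i)=|V(T_i)|$, and plug this into Theorem~\ref{hull1}; the paper reaches the same inequality the same way at the end of its induction. The lower bound is where you diverge. The paper argues by induction on $n$, deleting a leaf $v$ of $T_2$ and proving (its Claim~1) that $hn_{cc}(T_1\Box T_2^v)<hn_{cc}(T_1\Box T_2)$ by modifying a minimum hull set; combined with the induction hypothesis and the upper bound this squeezes the value to $m+n-1$. You instead prove the lower bound directly for an arbitrary hull set $S$: you linearize the hull computation into a chain that adds one generated vertex at a time, attach to each added vertex $v_k$ a witnessing cycle $\Gamma_k$ in $G[W_k]$, and show the $\Gamma_k$ are independent in the cycle space over $\mathbb{F}_2$ because only $\Gamma_{k^\star}$ can contribute edges at the last-added vertex $v_{k^\star}$. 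This bounds the number of generated vertices by the cycle rank $(m-1)(n-1)$ of $T_1\Box T_2$ and yields $|S|\ge m+n-1$. Your argument is self-contained, avoids the induction and the somewhat delicate verification in the paper's Claim~1 that the modified set $S'$ is still a hull set, and in fact proves the stronger general statement $hn_{cc}(G)\ge 2|V(G)|-|E(G)|-1$ for every connected graph $G$, of which the theorem is the case where this cyclomatic bound is attained; the paper's leaf-deletion induction, by contrast, stays closer to the structural machinery (Theorems~\ref{hull1} and~\ref{theo:bounds_with_hnccH_two}) developed earlier in the section.
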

\begin{proof}
For a tree $T$ and an end vertex $v$ of $T,$ let $T^v$ be the tree obtained from $T$ by deleting the vertex $v$.

We prove the theorem by using induction on $|V(T_2)|$. For the case $|V(T_2)| = 2$, Theorem~\ref{theo:bounds_with_hnccH_two} implies that $h(T_1 \Box T_2)=h(T_1)+1 = |V(T_1)|+1$ for any tree $T_1$. Assume that $h(T_1 \Box T_2)=h(T_1)+h(T_2)-1$ for any trees $T_1$ and $T_2$ with $|V(T_2)|=k$.
Choose arbitrary trees $T_1$ and $T_2$ with $|V(T_2)|=k+1$. Since the Cartesian product is commutative, we may assume that $|V(T_1)| \geq |V(T_2)|$. We first prove the following claim.

\smallskip 
\noindent $\textbf{Claim 1: }$ $h(T_1\Box T_2^v) < h(T_1 \Box T_2)$.  

\smallskip
\noindent \textit{Proof of Claim~1.} Assume the contrary that $h(T_1\Box T_2^v) \geq h(T_1 \Box T_2)$. Let $S$ be a minimum hull set of $T_1 \Box T_2$. For each $x \in V(T_2)$, we fix $S_x=S \cap (T_1 \times \{x\})$. Now, for the support vertex $u$ of $v$ in $T_2$, we fix $U=\{x: (x,v)\in S_v\}$ and $S'= (S\setminus S_v) \cup (U \times \{u\})$.
Suppose that there is no edge between $S_u$ and $S_v$ in $T_1 \Box T_2$. Then, it is clear that $\langle S \rangle= \langle S \setminus S_v \rangle \cup S_v$ and $S_v \neq T_1 \times \{v\}$. This is a contradiction to the fact that $S$ is a hull set of $T_1 \Box T_2$. Hence, there exists at least one edge between $S_u$ and $S_v$. This, in turn implies that $|S'|<|S|$. Also it is straightforward to verify that $S'$ is a hull set of $T_1 \Box T_2^v$. This is a contradiction to the fact that $|S'|<|S|=h(T_1 \Box T_2) \leq h(T_1 \Box T_2^v)$. Hence the claim follows. \hfill $\blacksquare$

\smallskip
Now, the induction hypothesis and Claim 1 show that $h(T_1)+k-1=h(T_1\Box T_2^v)<h(T_1 \Box T_2) \leq h(T_1)+h(T_2)-1=h(T_1)+k. $ Thus, $h(T_1 \Box T_2)=h(T_1)+k$ and so the result follows for all trees $T_1$ and $T_2$. 
\end{proof}

\begin{comment}
\begin{enumerate}
    \item {\bf Give an infinite family of graphs $G_k$ such that $hn_{cc}(G_k\Box K_n)=hn_{cc}(G_k)=k$}. 
   
\begin{figure} [htb!]
	\centering
	\includegraphics[width=0.35\textwidth]{graph_Gk.png} 
	\caption{A graph $G_k$, for $k \geq 4$. (If $k = 4$, let $V = \{v_1, v_2, \dots, v_{k-4}\}$ be empty.)  The vertices in black represent a hull set $S$ of $G$. For $S = S_1 \cup S_2$, where $S_1 = \{a_1,a_2\}$ and $S_2 = \{b_1,b_2, v_1, v_2, \dots, v_{k-4}\}$, we have that $\langle S_1 \rangle_{C} \cap \langle S_2 \rangle_{C} = \{u\}.$}
	\label{fig:graph_Gk}
\end{figure}
    
    \item {\bf Give examples of graphs $G$ and $H$ so that both the bounds of Theorem \ref{hull1} strict}
    \item {\bf Use Theorem \ref{theo:bounds_with_hnccH_two} to find some standard class of graphs. Eg. Multipartite graph with $K_n$}
    \item {\bf Use Theorem \ref{theo:bounds_with_hnccH_two} to derive a NP-completeness for $hn_{cc}(G \square H)$}
    
        \begin{enumerate}
            \item Check reductions from Anand et al. \cite{delta20} and Coelho et al. \cite{coelho2019p3} 
        \end{enumerate}
    \item {\bf Does it hold $hn_{P_3}(G) \leq hn_{cc}(G) \leq hn_{\triangle}(G)$ for any graph $G$?
\end{enumerate}
\end{comment}

\section{Convexity Number} \label{sec:cx}

We recall that the convexity number of a connected graph $G$ is the cardinality of a maximum proper convex set in $G$ and is denoted by $C_{cc}(G)$. If a graph $G$ contains a vertex $g$ that is not part of a cycle, then the convex hull of the remaining vertices will not generate $g$. Hence, $C_{cc}(G) = n-1$, which is the upper bound for any graph. Our first result establishes an equality for this parameter in the Cartesian product of two graphs.

\begin{theorem}\label{theo:cx}
    Let $G$ and $H$ be two nontrivial connected graphs with orders $m$ and $n$ respectively. Then $$C_{cc}(G\Box H) = \max\{n \cdot C_{cc}(G), m \cdot C_{cc}(H)\}.$$ 
\end{theorem}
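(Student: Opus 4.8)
The plan is to establish the two inequalities separately: first that $C_{cc}(G \Box H) \geq \max\{n \cdot C_{cc}(G), m \cdot C_{cc}(H)\}$, and then that $C_{cc}(G \Box H) \leq \max\{n \cdot C_{cc}(G), m \cdot C_{cc}(H)\}$. For the lower bound, let $S$ be a maximum proper convex set of $G$, so $|S| = C_{cc}(G)$. I would consider the set $S \times V(H) \subseteq V(G \Box H)$. By Theorem~\ref{theo:StimesT}, since $S$ is convex in $G$ and $V(H)$ is convex in $H$, the set $S \times V(H)$ is convex in $G \Box H$; it is proper because $S \neq V(G)$, and it has cardinality $n \cdot C_{cc}(G)$. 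Symmetrically $V(G) \times T$ gives $m \cdot C_{cc}(H)$ for a maximum proper convex set $T$ of $H$. Taking the larger of the two yields the lower bound.

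For the upper bound, let $W$ be any maximum proper convex set of $G \Box H$. By Lemma~\ref{union of subproduct}, $W = \bigcup_{i=1}^r (S_i \times T_i)$ with each $S_i \subseteq V(G)$, $T_i \subseteq V(H)$; moreover each component $S_i \times T_i$ is convex in $G \Box H$, and I expect one can argue each $S_i$ is convex in $G$ and each $T_i$ is convex in $H$ (project a would-be "short-circuit" vertex onto the relevant factor). The crux is then a counting argument: I would show that either $\pi_G(W) \neq V(G)$ or $\pi_H(W) \neq V(H)$ — otherwise, if both projections are everything, one can use the layer structure together with Lemma~\ref{line_column}-type reasoning to force $W = V(G \Box H)$, contradicting properness. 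Suppose without loss of generality $\pi_G(W) \neq V(G)$. Then for every $h \in V(H)$, the slice $\{g : (g,h) \in W\}$ is a convex set in the layer $G^h$ (the layer $G^h \cong G$, and convexity of $W$ restricted to a layer is inherited), and it is proper since it misses any vertex of $V(G) \setminus \pi_G(W)$. Hence each slice has size at most $C_{cc}(G)$, giving $|W| = \sum_{h \in V(H)} |\{g : (g,h) \in W\}| \leq n \cdot C_{cc}(G) \leq \max\{n \cdot C_{cc}(G), m \cdot C_{cc}(H)\}$.

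The main obstacle I anticipate is the dichotomy step: proving that a proper convex set cannot have both projections equal to the full vertex sets. The natural approach is: if $\pi_G(W) = V(G)$ and $\pi_H(W) = V(H)$, pick a vertex $(g_0, h_0) \notin W$; since the projections are surjective, there exist $h_1$ with $(g_0, h_1) \in W$ and $g_1$ with $(g_1, h_0) \in W$; then using connectivity of $G$ and $H$ and the cycle-convexity closure (triangles/4-cycles formed across adjacent layers, exactly as in the proof of Lemma~\ref{line_column}) one should be able to propagate membership and eventually generate all of a layer $G^{h}$ and an $H$-layer $^gH$, then invoke Lemma~\ref{line_column} to conclude $W = V(G \Box H)$ — a contradiction. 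Care is needed because $W$ may be disconnected, so this argument must be run inside the appropriate component or one must first reduce to the case where the missing vertex is "surrounded" appropriately. Once the dichotomy is in hand, the slicing bound is routine, and combining it with the lower bound construction completes the proof.
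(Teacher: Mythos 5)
Your lower bound is correct and is exactly the paper's argument: $S\times V(H)$ and $V(G)\times T$ are proper convex sets by Theorem~\ref{theo:StimesT}. The slicing step at the end is also sound (a slice $W\cap V(G^h)$ of a convex set is convex in the layer $G^h$, since any cycle inside a layer is a cycle in the product). The problem is the step you yourself flag as the crux: the dichotomy ``either $\pi_G(W)\neq V(G)$ or $\pi_H(W)\neq V(H)$'' is simply false, even for \emph{maximum} proper convex sets. Take $G=H=K_2$, so $G\Box H\cong C_4$ and $\max\{n\cdot C_{cc}(G), m\cdot C_{cc}(H)\}=2$. The diagonal $\{(g_1,h_1),(g_2,h_2)\}$ is an independent set, hence cycle convex, hence a maximum proper convex set, and both of its projections are the full vertex sets. (The diagonal of $P_3\Box P_3$ gives another such example.) Your proposed propagation argument --- pick $(g_0,h_0)\notin W$, use surjectivity of the projections, and generate a full layer --- cannot be repaired, because it makes no use of $|W|$ being large: an independent ``diagonal'' has full projections but generates nothing. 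So the upper bound as sketched does not go through.

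The paper avoids the dichotomy entirely. Assuming a convex set $S$ with $|S|>\max\{n\cdot C_{cc}(G), m\cdot C_{cc}(H)\}$, it applies the pigeonhole principle \emph{twice}: since $|S|>n\cdot C_{cc}(G)$, some $G$-layer $G^h$ contains more than $C_{cc}(G)$ vertices of $S$, and since the only convex subset of $G$ of size exceeding $C_{cc}(G)$ is $V(G)$ itself, the hull of that slice is the whole layer $V(G^h)$; symmetrically, since $|S|>m\cdot C_{cc}(H)$, the hull of $S$ contains a full $H$-layer $V({}^gH)$. Lemma~\ref{line_column} then forces $\langle S\rangle_C=V(G\Box H)$, contradicting properness. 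Note that this uses that $|S|$ exceeds \emph{both} terms of the maximum simultaneously --- which is exactly the information your dichotomy-plus-slicing route discards. If you replace your dichotomy step with this double pigeonhole (and drop the appeal to Lemma~\ref{union of subproduct}, which is not needed here), your proof becomes the paper's.
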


\begin{proof}
Let $S_1$ be a proper convex set of $G$ with $|S_1| = C_{cc}(G)$ and set $T_1 = V(H)$. Define $S_2 = V(G)$ and let $T_2$ be a proper convex set of $H$ with $|T_2| = C_{cc}(H)$.
By Theorem~\ref{theo:StimesT}, $S_i \times T_i$ for $i = 1,2$ is convex in $G \Box H$. Then $C_{cc}(G\Box H) \geq \max\{|S_1 \times T_1|, |S_2 \times T_2|  \}$ and the lower bound follows. 
For the upper bound, suppose by contradiction that there exists a convex set $S$ in $G \square H$ with $|S| > \max\{n \cdot C_{cc}(G), m \cdot C_{cc}(H)\}$. We assume first that $n \cdot C_{cc}(G) \geq m \cdot C_{cc}(H)$.  Given that $|S| > n \cdot C_{cc}(G)$, there exists some $h \in V(H)$ such that $|S \cap V(G^h)| > C_{cc}(G)$. This implies that $V(G^h) \subset \langle S \cap V(G^h) \rangle_C$. Given that $|S| > m \cdot C_{cc}(H)$, there exists some $v \in V(G)$ such that $|S \cap V(^gH)| > C_{cc}(H)$. This gives that $V(^gH) \subset \langle S \cap V(^gH) \rangle_C$. So, given that $V(G^h) \cup V(^gH) \subset \langle S \rangle_C$, we have that $\langle S \rangle_C = V(G \Box H)$, which contradicts the assumption that $S$ is a proper convex set. Since the Cartesian product is commutative, the case $n \cdot C_{cc}(G) < m \cdot C_{cc}(H)$ follows by similar arguments.
\end{proof}

Given that $C_{cc}(K_n) = 1$, $C_{cc}(C_n) = n-2$, and $C_{cc}(T_n) = n-1$,  Theorem~\ref{theo:cx} implies the following corollary.

\begin{corollary}
    For positive integers $m$ and $n$, it holds that
    \begin{enumerate}[label=\arabic*.]
        \item $C_{cc}(K_m\Box K_n)= \max\{ m,n \}$.
        \item $C_{cc}(K_m\Box C_n)= \max\{ n, mn-2 \}$.
        \item $C_{cc}(K_m\Box T_n)= \max\{ n, mn-1 \}$.
        \item $C_{cc}(C_m\Box C_n)= \max\{ mn-2n, mn-2m \}$.
        \item $C_{cc}(C_m\Box T_n)= \max\{ mn-2n, mn-m \}$.
        \item $C_{cc}(T_m\Box T_n)=\max\{  mn-n, mn-m \}$.
    \end{enumerate}
\end{corollary}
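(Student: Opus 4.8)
The plan is to derive all six identities as immediate consequences of Theorem~\ref{theo:cx}, which asserts $C_{cc}(G\Box H)=\max\{\,|V(H)|\cdot C_{cc}(G),\ |V(G)|\cdot C_{cc}(H)\,\}$ for nontrivial connected $G,H$. So everything reduces to the cycle convexity numbers of the three families of factors, together with bookkeeping of which factor carries which order.

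First I would record the three base values, exactly those quoted just before the corollary. If $T$ is a tree on $\ell$ vertices, no vertex lies on a cycle, so every subset of $V(T)$ is cycle convex and $C_{cc}(T)=\ell-1$. In $K_\ell$, any set $S$ with $|S|\ge 2$ forces every $u\notin S$ onto a triangle with two vertices of $S$, hence onto a cycle inside $K_\ell[S\cup\{u\}]$; iterating yields $\langle S\rangle_C=V(K_\ell)$, so the largest proper convex set is a singleton and $C_{cc}(K_\ell)=1$. In $C_\ell$, deleting two adjacent vertices leaves an induced path, and re-inserting either deleted endpoint does not create a cycle through it, so this $(\ell-2)$-set is convex; conversely, any $(\ell-1)$-set is $C_\ell$ minus a single vertex, and re-inserting that vertex restores the whole cycle through it, so $C_{cc}(C_\ell)=\ell-2$.

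Next I would substitute into Theorem~\ref{theo:cx} for each product, recalling that $K_m,C_m,T_m$ have order $m$ and $K_n,C_n,T_n$ have order $n$. Item~1 gives $\max\{n\cdot 1,\, m\cdot 1\}=\max\{m,n\}$; item~2 gives $\max\{n\cdot C_{cc}(K_m),\, m\cdot C_{cc}(C_n)\}=\max\{n,\, m(n-2)\}$; item~3 gives $\max\{n,\, m(n-1)\}$; item~4 gives $\max\{n(m-2),\, m(n-2)\}=\max\{mn-2n,\, mn-2m\}$; item~5 gives $\max\{n(m-2),\, m(n-1)\}=\max\{mn-2n,\, mn-m\}$; and item~6 gives $\max\{n(m-1),\, m(n-1)\}=\max\{mn-n,\, mn-m\}$.

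I expect no genuine obstacle here: the work is purely bookkeeping, and the only care-points are a clean justification of the three base values and matching each factor to its order, so that in item~2 the factor $C_{cc}(C_n)=n-2$ is multiplied by $m=|V(K_m)|$ and in item~3 the factor $C_{cc}(T_n)=n-1$ is multiplied by $m$. I would also double-check the final algebraic forms in items~2 and~3, where $m(n-2)$ and $m(n-1)$ ought to appear as $mn-2m$ and $mn-m$.
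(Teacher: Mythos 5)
Your proposal is correct and follows exactly the route the paper intends: record $C_{cc}(K_\ell)=1$, $C_{cc}(C_\ell)=\ell-2$, $C_{cc}(T_\ell)=\ell-1$ (the paper states these without proof immediately before the corollary; your justifications of them are sound) and substitute into Theorem~\ref{theo:cx}. Your closing concern about items~2 and~3 is well founded: applying Theorem~\ref{theo:cx} to $K_m\Box C_n$ gives $\max\{n\cdot 1,\ m(n-2)\}=\max\{n,\ mn-2m\}$ and to $K_m\Box T_n$ gives $\max\{n,\ mn-m\}$, whereas the corollary as printed reads $mn-2$ and $mn-1$. A quick check with $K_2\Box C_4\cong Q_3$ confirms your version: the theorem yields $\max\{4,4\}=4$, and indeed no $6$-vertex subset of $Q_3$ is a proper convex set, so the printed $mn-2=6$ cannot be right. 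In short, your derivation is the paper's own argument carried out carefully, and it exposes what are evidently typos in items~2 and~3 of the stated corollary.
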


Now, we proceed to strong and lexicographic products, where there is a relation with the independence number.

\begin{theorem}\label{theo:convexity_strong_lexico}
    Let $G$ and $H$ be two nontrivial connected graphs. For $\ast\in \{\boxtimes, \circ\},$ it holds that  $$C_{cc}(G\ast H)=\alpha(G\ast H).$$
\end{theorem}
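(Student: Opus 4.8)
The plan is to prove the two inequalities $C_{cc}(G\ast H)\le\alpha(G\ast H)$ and $C_{cc}(G\ast H)\ge\alpha(G\ast H)$ separately. For the upper bound, the key observation is that an independent set is always cycle convex (it induces no edges at all, hence no cycle through any added vertex), so in particular $\alpha(G\ast H)$ is a lower bound and the nontrivial content is that no \emph{larger} proper convex set exists. Thus I would start from a convex set $S\subseteq V(G\ast H)$ with $|S|>\alpha(G\ast H)$ and show $S=V(G\ast H)$. Since $|S|$ exceeds the independence number, $G\ast H[S]$ contains an edge, and I want to bootstrap from one edge to everything, exploiting the abundance of triangles in strong and lexicographic products: if $(g_1,h),(g_2,h)\in S$ with $g_1\sim g_2$ in $G$, then for \emph{any} common neighbor-structure the products force triangles, so one shows $\langle S\rangle_C$ contains an entire $G$-layer or $H$-layer, and then Lemma-style arguments as in Theorem~\ref{hullstrong} / Theorem~\ref{hulllexico} (the $\langle\cdot\rangle_C$ of an edge inside a layer already spreads to all layers) finish it. The cleanest route: show that any convex set $S$ with an edge in $G\ast H[S]$ must, after repeatedly closing under triangles, contain a full layer, and a full layer is a hull set by the layer-spreading arguments already in the paper; hence $S=V(G\ast H)$, contradicting properness.

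For the lower bound I need a proper convex set of size exactly $\alpha(G\ast H)$; a maximum independent set $I$ of $G\ast H$ works, provided $I\ne V(G\ast H)$, which holds because $G$ and $H$ are nontrivial connected graphs, so $G\ast H$ has edges and $\alpha(G\ast H)<|V(G)||V(H)|$. An independent set is vacuously cycle convex since its induced subgraph has no cycles, so $\langle I\rangle_C=I$, giving $C_{cc}(G\ast H)\ge|I|=\alpha(G\ast H)$. Combining the two bounds yields the equality.

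I expect the main obstacle to be the upper-bound direction, specifically making rigorous the claim that a convex set containing a single edge must eventually contain a complete layer. The delicate point is that $S$ could contain an edge whose endpoints share no vertex in the "same" coordinate — e.g.\ a diagonal edge $(g_1,h_1)(g_2,h_2)$ in the strong product with $g_1\sim g_2$ and $h_1\sim h_2$ — so one has to check that triangles are still available to generate new vertices of $S$ (for the strong product $(g_1,h_1),(g_2,h_2),(g_1,h_2)$ is a triangle, so $(g_1,h_2)\in\langle S\rangle_C$, reducing to the case of an axis-aligned edge; for the lexicographic product $(g_1,h_1)(g_2,h_2)$ with $g_1\sim g_2$ always lies in triangles with $(g_1,h')$ for any $h'$, which is even easier). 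I would organize the upper-bound argument as: (1) $|S|>\alpha$ forces an edge in $G\ast H[S]$; (2) convexity + product triangles push $\langle S\rangle_C$ to contain an axis-aligned edge, then a full $G$- or $H$-layer; (3) a full layer generates everything by the spreading arguments of Theorems~\ref{hullstrong} and~\ref{hulllexico}; hence $S$ is not proper — contradiction. A secondary subtlety is handling the lexicographic case where $G\circ H$ is not symmetric in $G,H$, so the "layer" one lands in must be chosen on the $G$-side; this only affects bookkeeping, not the substance.
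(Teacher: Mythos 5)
Your proposal is correct and follows essentially the same route as the paper: independent sets are vacuously cycle convex (giving the lower bound), and any convex set exceeding $\alpha(G\ast H)$ contains an edge whose convex hull is everything by the spreading arguments of Theorems~\ref{hullstrong} and~\ref{hulllexico}, so proper convex sets must be independent. You are in fact slightly more careful than the paper, which asserts without comment that \emph{any} adjacent pair is a hull set, whereas those theorems literally treat only intra-layer edges; your reduction of a diagonal edge to an axis-aligned one via a product triangle fills that small gap.
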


\begin{proof} Let $\ast\in \{\boxtimes, \circ\}$. By Theorems~\ref{hullstrong} and~\ref{hulllexico}, any two adjacent vertices in $G\ast H$ forms a hull set in $G\ast H$. Thus the only proper convex sets in $G\ast H$ are independent sets. Therefore 
$C_{cc}(G\ast H)=\alpha(G\ast H)$. 
\end{proof}

It is easy to verify that $\alpha(K_n) = 1$, $\alpha(C_n) = \lfloor \frac{n}{2} \rfloor$,  $\alpha(P_n) = \lceil \frac{n}{2} \rceil$. Using the result $\alpha(K_n \boxtimes G) = \alpha(G)$ by Vesel~\cite{vesel1998independence}, the results for the strong product of cycles by Sonnemann and Krafft~\cite{sonnemann1974independence}, as well as the equality for the lexicographic product $\alpha(G \circ H) = \alpha(G)\alpha(H)$ by Geller and Stahl~\cite{geller1975chromatic}, we obtain the next corollary.

\begin{corollary}
    For positive integers $j, k, m$ and $n$, the following holds
    \begin{enumerate}[label=\arabic*.]
        \item $C_{cc}(K_m\boxtimes K_n) = C_{cc}(K_m\circ K_n)= 1$.
        \item $C_{cc}(K_m\boxtimes C_n)= C_{cc}(K_m\circ C_n)= \lfloor \frac{n}{2} \rfloor$.
        \item $C_{cc}(K_m\boxtimes P_n)= C_{cc}(K_m\circ P_n)= \lceil \frac{n}{2} \rceil$.
        \item $C_{cc}(C_m\circ C_n)= \lfloor \frac{m}{2} \rfloor \lfloor \frac{n}{2} \rfloor$.
        \item $C_{cc}(C_{2j}\boxtimes C_n)= j \lfloor \frac{n}{2} \rfloor$.
        \item $C_{cc}(C_{2j+1}\boxtimes C_{2k+1})= jk + \lfloor \frac{k}{2} \rfloor$, with $j \geq k$.
        \item $C_{cc}(C_m\boxtimes P_n)= C_{cc}(C_m\circ P_n)= \lfloor \frac{m}{2} \rfloor \lceil \frac{n}{2} \rceil$.
        \item $C_{cc}(P_m\boxtimes P_n)= C_{cc}(P_m\circ P_n)= \lceil \frac{m}{2} \rceil \lceil \frac{n}{2} \rceil$.
    \end{enumerate}
\end{corollary}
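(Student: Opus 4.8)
The plan is to reduce the whole statement to the computation of independence numbers: by Theorem~\ref{theo:convexity_strong_lexico}, $C_{cc}(G\ast H)=\alpha(G\ast H)$ for $\ast\in\{\boxtimes,\circ\}$, so it suffices to combine the elementary values $\alpha(K_n)=1$, $\alpha(C_n)=\lfloor n/2\rfloor$, $\alpha(P_n)=\lceil n/2\rceil$ with a few known product identities. For Items~1--3, note that $K_m\boxtimes K_n=K_m\circ K_n=K_{mn}$; for the remaining strong products apply Vesel's identity $\alpha(K_m\boxtimes G)=\alpha(G)$ to $G\in\{C_n,P_n\}$, and for the remaining lexicographic products apply the Geller--Stahl identity $\alpha(G\circ H)=\alpha(G)\alpha(H)$ with $\alpha(K_m)=1$. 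Item~4 and the lexicographic halves of Items~7--8 then follow immediately from Geller--Stahl. For Items~5--6 I would quote the Sonnemann--Krafft formulas for the independence number of a strong product of two cycles and rewrite them in the stated form, using $\alpha(C_{2j})=j$ and $\alpha(C_{2j+1})=j$.

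The only cases not handled by a direct citation are the strong products $C_m\boxtimes P_n$ and $P_m\boxtimes P_n$, which need a short self-contained argument. In both, the lower bound $\alpha\ge\alpha(G)\alpha(H)$ is obtained by taking the Cartesian product of maximum independent sets of the two factors. For the matching upper bounds I would partition the path factor(s) into cliques: $V(P_n)$ splits into $\lceil n/2\rceil$ cliques (consecutive pairs, plus one singleton when $n$ is odd). For $P_m\boxtimes P_n$, partitioning both factors this way makes the induced ``slabs'' copies of $K_a\boxtimes K_b$ with $a,b\le 2$, each a complete graph with independence number $1$, so any independent set meets each of the $\lceil m/2\rceil\lceil n/2\rceil$ slabs in at most one vertex. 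For $C_m\boxtimes P_n$, each induced slab is a copy of $C_m\boxtimes K_2$ (or of $C_m$), and one checks that $\alpha(C_m\boxtimes K_2)=\lfloor m/2\rfloor$: two vertices of $C_m\boxtimes K_2$ are non-adjacent precisely when their $C_m$-coordinates are distinct and non-adjacent in $C_m$, so an independent set projects injectively onto an independent set of $C_m$, with equality witnessed by a single $C_m$-layer. Summing over the $\lceil n/2\rceil$ slabs yields $\alpha(C_m\boxtimes P_n)\le\lfloor m/2\rfloor\lceil n/2\rceil$, matching the lower bound.

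The main obstacle is precisely this last point: for odd cycles the clique-cover number $\lceil m/2\rceil$ strictly exceeds $\alpha(C_m)=\lfloor m/2\rfloor$, so the strong-product-with-a-path bounds cannot be read off from a black-box submultiplicativity argument and genuinely require the slab decomposition together with the explicit value of $\alpha(C_m\boxtimes K_2)$. Everything else is bookkeeping over Theorem~\ref{theo:convexity_strong_lexico} and the cited independence-number identities.
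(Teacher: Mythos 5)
Your proposal is correct and follows the paper's route exactly: reduce everything to independence numbers via Theorem~\ref{theo:convexity_strong_lexico}, then combine $\alpha(K_n)=1$, $\alpha(C_n)=\lfloor n/2\rfloor$, $\alpha(P_n)=\lceil n/2\rceil$ with the cited identities of Vesel, Sonnemann--Krafft, and Geller--Stahl. The only point of difference is that you supply an explicit clique-partition (``slab'') argument for $\alpha(C_m\boxtimes P_n)$ and $\alpha(P_m\boxtimes P_n)$, which the paper leaves implicit; your argument there is sound (it amounts to the standard bound $\alpha(G\boxtimes H)\le\alpha(G)\cdot\theta(H)$, with $\theta$ the clique cover number, applied with $\theta(P_n)=\alpha(P_n)=\lceil n/2\rceil$ and $\alpha(C_m\boxtimes K_2)=\alpha(C_m)$), so this is a welcome completion of a step the paper glosses over rather than a divergence from it.
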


We close this section with a complexity result for the cycle convexity number derived from the above discussions. For that recall the related decision problem.

\begin{problem}{\textsc{Convexity Number in Cycle Convexity}}\\
\textbf{Instance:} A graph $G$ and a positive integer $k$.\\
\textbf{Question:} Is $C_{cc}(G) \geq k$?
\end{problem}

\begin{corollary}
\textsc{Convexity number in Cycle Convexity} is NP-complete when restricted to the class of Cartesian, strong, or lexicographic products of two nontrivial graphs.
\end{corollary}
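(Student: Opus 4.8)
The plan is to settle membership in NP once, and then to give three short polynomial‑time reductions — one per product — turning the exact formulas of Theorems~\ref{theo:cx} and~\ref{theo:convexity_strong_lexico} into equivalences between instances. Membership in NP is immediate: a proper convex set $S$ with $|S|\ge k$ is a polynomial certificate, and it is checkable in polynomial time, since a vertex $u\notin S$ lies in $\langle S\rangle$ exactly when $u$ has two neighbours in $S$ belonging to the same connected component of $G[S]$ (and properness is trivial); this is already known from~\cite{lima2024complexity}, where NP‑completeness of \textsc{Convexity Number in Cycle Convexity} on general connected graphs is established.

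For the Cartesian product I would reduce from \textsc{Convexity Number in Cycle Convexity} restricted to connected graphs. Given an instance $(G,k)$ with $m=|V(G)|\ge 2$, output $(G\Box K_m,\,mk)$. Since $G$ is nontrivial and connected, every singleton of $V(G)$ is a proper convex set, so $C_{cc}(G)\ge 1$; moreover $C_{cc}(K_m)=1$. Hence by Theorem~\ref{theo:cx},
\[
C_{cc}(G\Box K_m)=\max\{\,m\cdot C_{cc}(G),\; m\cdot C_{cc}(K_m)\,\}=\max\{\,m\cdot C_{cc}(G),\; m\,\}=m\cdot C_{cc}(G),
\]
so $C_{cc}(G)\ge k$ if and only if $C_{cc}(G\Box K_m)\ge mk$. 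The reduction is clearly polynomial and $G\Box K_m$ is a Cartesian product of two nontrivial connected graphs.

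For the strong and lexicographic products I would reduce from \textsc{Independent Set} on connected graphs. Given $(G,k)$ with $G$ connected and nontrivial, output $(G\boxtimes K_2,\,k)$, respectively $(G\circ K_2,\,k)$. By Theorem~\ref{theo:convexity_strong_lexico} these have cycle convexity number $\alpha(G\boxtimes K_2)$ and $\alpha(G\circ K_2)$. Applying Vesel's identity $\alpha(K_n\boxtimes G)=\alpha(G)$~\cite{vesel1998independence} with $n=2$ and commutativity of the strong product gives $\alpha(G\boxtimes K_2)=\alpha(G)$; applying $\alpha(G\circ H)=\alpha(G)\alpha(H)$~\cite{geller1975chromatic} with $H=K_2$ gives $\alpha(G\circ K_2)=\alpha(G)$. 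In both cases the constructed graph is a product of two nontrivial connected graphs built in polynomial time, and it is a yes‑instance exactly when $\alpha(G)\ge k$.

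The only genuinely delicate point is the Cartesian case: the formula $C_{cc}(G\Box H)=\max\{n\,C_{cc}(G),m\,C_{cc}(H)\}$ carries a maximum, so the naive choice $H=K_2$ leaves the spurious term $\max\{2C_{cc}(G),m\}$, which fails to separate yes‑ from no‑instances. Inflating the second factor to $K_m$ makes the ``trivial'' term $m\cdot C_{cc}(K_m)=m$ dominated by $m\cdot C_{cc}(G)$ using only $C_{cc}(G)\ge1$, which is what resolves it. Beyond this, the reductions are routine; the remaining care is simply to note that both \textsc{Convexity Number in Cycle Convexity} and \textsc{Independent Set} remain NP‑hard on connected graphs, so that Theorems~\ref{theo:cx} and~\ref{theo:convexity_strong_lexico} apply to all constructed instances.
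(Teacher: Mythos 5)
Your proposal is correct and follows essentially the same route as the paper: NP-membership plus the exact formulas of Theorems~\ref{theo:cx} and~\ref{theo:convexity_strong_lexico} applied to the known hardness of \textsc{Convexity Number in Cycle Convexity} and \textsc{Independent Set}. The only difference is that you make explicit the reductions the paper calls ``straightforward'' (in particular the choice of $K_m$ rather than $K_2$ as the second Cartesian factor, which is indeed needed to kill the spurious term in the maximum), which is a useful clarification but not a different argument.
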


\begin{proof}
The NP membership is clear. For the Cartesian product, given that \textsc{Convexity number in Cycle Convexity} is NP-complete~\cite{lima2024complexity}, the result is straightforward from Theorem~\ref{theo:cx}. For the strong and lexicographic products, recall that given a graph $G$ and a positive integer $k$, deciding whether $\alpha(G) \geq k$ is NP-complete~\cite{garey1979computers}. Hence the conclusion follows by  Theorem~\ref{theo:convexity_strong_lexico}.
\end{proof}

%\section{Concluding Remarks}

\begin{comment}
\begin{proposition}
Let $G$ be a graph with order at least $3$.
If $\langle \{u,v\} \rangle_C = V(G)$, for every $uv \in E(G)$, then $C_{cc}(G) = \alpha(G)$. 
\end{proposition}

\begin{proof}
It is clear that any independent set $S \subseteq V(G)$ is a convex set in $G$, since for every $w \in V(G)$, $w$ does not form a cycle with vertices in $S$. We then assume that $C_{cc}(G) \geq \alpha(G)$. We suppose by contradiction that $C_{cc}(G) > \alpha(G)$ and let $S$ be a maximum proper convex set in $G$. Since $|S| > \alpha(G)$, there exist $u,v \in S$ such that $uv \in E(G)$. By hypothesis, $\langle \{u,v\} \rangle_C = V(G)$, which contradicts the assumption that $S$ is a proper convex set.
\end{proof}
\end{comment}

\bibliographystyle{amsplain}
\bibliography{cycle}
\end{document}